\theoremstyle{plain}
\newtheorem{theo}{Theorem}[section]
\newtheorem*{thm}{Theorem}
\newtheorem{cor}[theo]{Corollary}
\newtheorem{lem}[theo]{Lemma}
\newtheorem{prop}[theo]{Proposition}
\theoremstyle{definition}
\newtheorem{df}[theo]{Definition}
\newtheorem{ex}[theo]{Example}
\newtheorem{rem}[theo]{Remark}
\def \calO{{\mathcal O}}
\def \Z{{\mathbb Z}}
\def \Q{{\mathbb Q}}
\def \e{\varepsilon}
\def \a{\alpha}
\author{Yu Hashimoto}
\author{Miho Aoki}
\address{
Department of Mathematics,
Interdisciplinary Faculty of Science and Engineering,
Shimane University,
1060, Nishikawatsu, Matsue, Shimane, 690-8504, Japan
 }
\email{yh70217021@gmail.com}
\address{Department of Mathematics,
Interdisciplinary Faculty of Science and Engineering,
Shimane University,
1060, Nishikawatsu, Matsue, Shimane, 690-8504, Japan
}
\email{aoki@riko.shimane-u.ac.jp }
\subjclass[2020]{ Primary 11R04, 11R16,  Secondary 11C08, 11L05, 11R80.    } 
\keywords{normal integral basis, simplest cubic fields, Gaussian period, period polynomial, tamely ramified extension} 
\thanks{This work was supported by JSPS KAKENHI Grant Number JP21K03181}  
\title[Simplest cubic fields]{Normal integral bases and Gaussian periods \\ in the simplest cubic fields}  
\begin{document}

\begin{abstract} 
We give all normal integral bases  for the simplest cubic field $L_n$  generated by the roots of Shanks' cubic polynomial 
when these bases exist, that is, $L_n/\mathbb Q$ is tamely ramified. Furthermore, as an application of the result,
we give an explicit relation  between the roots of Shanks' cubic polynomial  and the Gaussian periods of $L_n$ 
in the case $L_n/\mathbb Q$ is tamely ramified, which is a generalization of the work of Lehmer, Ch\^{a}telet and Lazarus  
 in the case that the conductor of $L_n$ is equal to $n^2+3n+9$.
 
%
\end{abstract}
\maketitle
\section{Introduction}\label{sect:intro}

Let $n$ be an integer.
We consider Shanks' cubic polynomial $f_n(X)\in\Z[X]$ defined by
\begin{equation}\label{eq:fn}
f_n(X)=X^3-nX^2-(n+3)X-1 ,
\end{equation}
see \cite[p.\ 1138]{S}. 
The polynomial $f_n(X)$ is irreducible for all $n\in\Z$,
and $L_n:=\Q(\rho_n)$ is a cyclic cubic field where $\rho_n$ is a root of 
$f_n(X)$. 
The field is called the simplest cubic field. 
We have $L_n=L_{-n-3}$ since $f_n(X)=-X^3f_{-n-3}(1/X)$.
We write $\Delta_n:=n^2+3n+9=bc^3$ 
with $b,c\in\Z_{>0}$ where $b$ is cube-free.
$L_n/\Q$ is tamely ramified if and only if $3$ is unramified.
The discriminant of $f_n(X)$ is $d(f_n)=\Delta_n^2$, 
and the discriminant $D_{L_n}$ 
of $L_n$ has been known as follows
(see Cusick \cite[Lemma 1]{C} in the case that $\Delta_n$ is square-free,
Washington \cite[Proposition 1]{WL} in the case of $n\not\equiv3 \pmod{9}$, and Kashio-Sekigawa \cite{KS} in general).
\begin{equation}\label{eq:Df}
D_{L_n}=\mathfrak{f}_{L_n}^2
\end{equation}
where $\mathfrak{f}_{L_n}$ is the conductor of $L_n$ given by
\begin{equation}\label{eq:cond}
\mathfrak{f}_{L_n}=\gamma\prod_{\substack{
p\mid b
\\
p\neq3
}}p,\ \ 
\gamma=
\begin{cases} 
1, & \text{if}\ 3\nmid n\ \text{or}\ n\equiv12 \pmod{27},
\\
3^2, & \text{otherwise},
\end{cases}
\end{equation}
where the product runs over all prime numbers $p$ dividing $b$.
Especially $L_n/\mathbb{Q}$ is tamely ramified if and only if $3\nmid n$
or $n\equiv12 \pmod{27}$.
Note that $\Delta_n=\Delta_{-n-3}$ and it is known that 
$D_L=\mathfrak{f}_L^{p-1}$ for a cyclic number field $L$ of 
prime degree $p$. 
\\
\quad
Let $K/\Q$ be a finite Galois extension with the Galois group G.
We denote the ring of integers of $K$ by $\calO_K$.
If the set $\{\sigma(\alpha)\mid\sigma\in G\}$ is a basis of $\calO_K$ 
over $\Z$, we call the basis $\it{a\ normal\ integral\ basis}$
(abbreviated to NIB) and $\alpha$ the generator.
It is well-known that if $K/\mathbb{Q}$ has an NIB, then the extension is 
tamely ramified. 
For abelian number fields, we have the following.
\begin{thm}[Hilbert-Speiser]\label{thm:HS}
Let $K$ be an abelian number field.
The following three conditions are equivalent. 
\begin{itemize}
\item[(i)]
$K/\Q$ is tamely ramified.
\item[(ii)]
The conductor of $K$ is square-free. 
\item[(iii)]
$K/\Q$ has a normal integral basis.
\end{itemize}
\end{thm}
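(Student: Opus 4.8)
The plan is to establish the three equivalences as a cycle, handling (iii)$\Rightarrow$(i), (i)$\Leftrightarrow$(ii), and (ii)$\Rightarrow$(iii) in turn; the genuine content is the construction of a normal integral basis out of a square-free conductor. The implication (iii)$\Rightarrow$(i) is immediate from the general fact, already recalled in the text, that any Galois extension carrying a NIB is tamely ramified (Noether's theorem: a NIB exhibits $\calO_K$ as free, hence projective, over $\Z[G]$, and projectivity over the group ring forces tameness).

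For (i)$\Leftrightarrow$(ii) I would argue through local conductor theory. By Kronecker--Weber, $K\subseteq\Q(\zeta_m)$ with $m=\mathfrak{f}_K$ the conductor, and a prime $p$ ramifies in $K$ exactly when $p\mid m$. The decisive point is the character-theoretic conductor exponent at $p$: for an abelian extension of $\Q_p$ with $p$ odd, a ramified character has conductor exponent $1$ in the tame case and at least $2$ in the wild case, while for $p=2$ there is no nontrivial tame ramification at all (the tame inertia quotient has order dividing $p-1=1$). Combining these local statements via the conductor--discriminant formula shows that $v_p(\mathfrak{f}_K)\le 1$ for every $p$ — that is, $\mathfrak{f}_K$ is square-free — if and only if every ramified prime is tame.

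The heart of the matter is (ii)$\Rightarrow$(iii). Again embed $K\subseteq\Q(\zeta_m)$ with $m=\mathfrak{f}_K$ now square-free. First I would show that $\zeta_m$ itself generates a NIB of $\Q(\zeta_m)/\Q$: writing $m=p_1\cdots p_r$, one has $\calO_{\Q(\zeta_{p})}=\Z[\zeta_{p}]$ with $\Z$-basis $\{\zeta_{p},\zeta_{p}^2,\dots,\zeta_{p}^{p-1}\}$, which is precisely the Galois orbit of $\zeta_{p}$; since the fields $\Q(\zeta_{p_i})$ have pairwise coprime discriminants, their compositum satisfies $\calO_{\Q(\zeta_m)}=\bigotimes_i\calO_{\Q(\zeta_{p_i})}$, and the product $\prod_i\zeta_{p_i}$ is again a primitive $m$-th root of unity, hence a conjugate of $\zeta_m$, which inherits the NIB property. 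I would then descend to $K$ using the Gaussian period
\[
\eta=\mathrm{Tr}_{\Q(\zeta_m)/K}(\zeta_m)=\sum_{h\in H}h(\zeta_m),\qquad H=\mathrm{Gal}(\Q(\zeta_m)/K).
\]

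Finally I would verify that $\{\bar\sigma(\eta):\bar\sigma\in\mathrm{Gal}(K/\Q)\}$ is a NIB of $\calO_K$. Linear independence over $\Z$ is clear, since distinct cosets of $H$ contribute sums over disjoint blocks of the $\Z$-basis $\{g(\zeta_m):g\in G\}$. For spanning, take $x\in\calO_K\subseteq\calO_{\Q(\zeta_m)}$ and write $x=\sum_{g\in G}c_g\,g(\zeta_m)$ with $c_g\in\Z$; the invariance $h(x)=x$ for $h\in H$ forces the coefficients to be constant on cosets, and here the hypothesis that $G$ is abelian (so $Hg=gH$) is exactly what permits regrouping $x$ as a $\Z$-combination of the periods $\bar\sigma(\eta)$. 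I expect the principal obstacles to be the two descent facts: that $\zeta_m$ is a NIB generator upstairs for square-free $m$, which rests on the coprimality of the prime-cyclotomic discriminants, and the coset bookkeeping downstairs, which genuinely uses the commutativity of $G$. The local conductor step in (i)$\Leftrightarrow$(ii), in particular the anomalous behaviour at $p=2$, is the remaining point demanding care.
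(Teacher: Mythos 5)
The paper does not actually prove this theorem: its ``proof'' is the single citation \cite[chap.\,9, Theorem 3.4]{L}, so there is no internal argument to measure yours against, and any genuine proof you give is by definition a different route. Your sketch is correct, and it is essentially the classical argument one finds in the cited literature: (iii)$\Rightarrow$(i) by Noether's criterion (freeness $\Rightarrow$ projectivity over $\Z[G]$ $\Rightarrow$ tameness); (i)$\Leftrightarrow$(ii) by local conductor exponents, where you rightly single out $p=2$ (no nontrivial tame ramification, consistent with the conductor exponent at a ramified prime $2$ being at least $2$); and the substantive implication (ii)$\Rightarrow$(iii) by cyclotomic descent, first showing $\zeta_m$ generates a NIB of $\Q(\zeta_m)$ for square-free $m$ via the coprime-discriminant tensor decomposition $\calO_{\Q(\zeta_m)}=\bigotimes_i\calO_{\Q(\zeta_{p_i})}$, then descending with the Gaussian period $\eta=\mathrm{Tr}_{\Q(\zeta_m)/K}(\zeta_m)$. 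Two remarks. First, in your spanning step the regrouping of coefficients along cosets only requires $H$ to be normal in $G$ (write $\sum_{h\in H}hg(\zeta_m)=g\bigl(\sum_{h'\in H}h'(\zeta_m)\bigr)$ after conjugating $H$ by $g$), so commutativity is used only in this weak form; your phrasing ``$Hg=gH$'' is exactly the right condition. Second, your descent step is precisely the fact the authors themselves later quote from \cite[Proposition\,4.31]{N} --- that the Gaussian period generates a NIB when the conductor is square-free --- which underlies Theorem\,\ref{theo:MP-gauss} in Section\,\ref{sect:gaussian}; so your proof has the virtue of making the paper self-contained on a point it otherwise outsources twice, at the cost of invoking (correctly, but without proof) the standard inputs: the conductor--ramification theorem, the conductor exponent computation, and the tensor decomposition of rings of integers for fields with coprime discriminants.
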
 
\begin{proof}
See \cite[chap.\,9, Theorem 3.4]{L}.
\end{proof}
From this theorem and (\ref{eq:cond}), we know that $L_n$ has an NIB if and only if 
$3\nmid n$ or $n\equiv12 \pmod{27}$.
There is a one-to-one correspondence between
the generators of NIBs of a  finite Galois extension which has an NIB.
and  the  elements of the multiplicative  group $\mathbb Z [G]^{\times}$ of the group ring 
    of the Galois group $G$ (see Lemma\,\ref{lem:all NIB}). If $G=\mathrm{Gal}(L_n/\mathbb Q)=\langle \sigma \rangle$, then  we have 
$
\mathbb{Z}[G]^\times=\{\pm1_G,\pm\sigma,\pm\sigma^2\},
$
(\cite[p.\,2932 and 2933]{Ar}). Therefore, we know that 
there are 6 generators of NIBs  (two pairs of conjugate elements) and hence  $L_n/\mathbb Q$ has two normal integral bases.

In this paper, we first give all normal integral bases by the roots of Shanks' cubic polynomial for the simplest cubic field $L_n$ 
when they exist, that is, $L_n/\mathbb Q$ is tamely ramified (Theorems\,\ref{theo} and \ref{theo:main}).
 We see from Lemma\,\ref{lem:a0a1} that there are exactly $6$ pairs of integers $\{a_0, a_1\}$  
 that satisfy the assumptions of the theorems.
Next in this paper, as an application of Theorems\,\ref{theo} and \ref{theo:main} , we give an explicit relation  between the root of the Shanks' cubic polynomial $f_n (X)$ and the Gaussian period of $L_n$ 
in the case $L_n/\mathbb Q$ is tamely ramified.
In general, for a positive integer $\mathfrak f$, let $K=\mathbb Q(\zeta_{\mathfrak f})$ be the cyclotomic field where $\zeta_{\mathfrak f}$ is a primitive $\mathfrak f$-th root of unity
and $L$ be  an intermediate field of $K/\mathbb Q$.
The Gaussian periods of $L$ are defined by $\eta_0:=\mathrm{Tr}_{K/L}(\zeta_{\mathfrak f})$ and its conjugates.
When $\mathfrak f$ is square-free, then it is known that $\eta_0$ is a generator of a normal integral basis of $L/\mathbb Q$
(for example, see \cite[Proposition\,4.31]{N}).
For the simplest cubic field $L_n$ in the case $L_n/\mathbb Q$ is tamely ramified, the following has been known
by Lehmer \cite{Le} (when $\Delta_n =\mathfrak f_{L_n}$ is a prime number),  Ch\^{a}telet \cite{Ch} and Lazarus \cite{La}  (when  $\Delta_n =\mathfrak f_{L_n}$ is 
square-free).
If $\Delta_n =\mathfrak f_{L_n}$ and $\mathfrak f_{L_n}$ is square-free, then 
\begin{equation}\label{eq:Lehmer-gauss}
\pm \eta_0 = \rho_n + \frac{ \left(\dfrac{n}{3}\right)-n}{3 }
\end{equation}
 holds for $\rho_n$, 
which is one of the three roots of $f_n(X)$,
where $\left(\dfrac{n}{3}\right)$ is the Legendre symbol. 
Furthermore, the minimal polynomial of the Gaussian period $\eta_0$ is given by
\begin{equation}\label{eq:Lehmer-poly}
X^3-\mu (\Delta_n)X^2-\frac{\Delta_n -1}{3}X+\mu (\Delta_n )\frac{ (a+3)\Delta_n-1}{27},
\end{equation}
where $\mu$ is the M\"{o}bius function and $a=\pm (2n+3),\ a\equiv 1\pmod{3}$.
We generalize these results to general $L_n$ whose conductor $\mathfrak f_{L_n}$ is square-free (Theorem\,\ref{theo:MP-gauss}).
Finally, we give some numerical examples (Examples\,\ref{ex:286},\ref{ex:66}  and \ref{ex:table})
by using Magma and PARI/GP.
\section{Normal integral bases and the multiplicative group of a group ring}
\label{sect:NIB}
Let $K/\mathbb{Q}$ be a finite Galois extension with the Galois group $G$,
and $\mathcal{O}_K$ be the ring of integers of $K$.
The action of the group ring  $K[G]$ (or $\mathcal{O}_K[G]$) on $K$ is given by
\[
x.a:=\sum_{\sigma\in G}n_{\sigma}\sigma(a)\ \ 
\quad 
\left(x=\sum_{\sigma\in G}n_\sigma\sigma\in\ K[G]\ 
(\mathrm{or}\ \mathcal{O}_K[G]),\ a\in K\right),
\]
and $K$ (resp. $\mathcal{O}_K$) is a $K[G]$
(resp. $\mathcal{O}_K[G]$)-module with this action.
\\
\quad
It is known (\cite{E}) that there is a one-to-one correspondence between generators of NIBs of $K$ and elements of $\mathbb{Z}[G]^\times$,
but we give a proof for the reader's convenience.
\begin{lem}\label{lem:NB}
Assume that $\alpha\ (\in\mathcal{O}_K)$ is a generator of an NIB 
of $K$.
Let $x\in\mathbb{Z}[G]$.
We have $x.\alpha=0$ if and only if $x=0$. Namely,
\[
\mathrm{Ann}_{\mathbb{Z}[G]}(\alpha)
(:=\{x\in\mathbb{Z}[G]\mid x.\alpha=0\})=0.
\]  
Especially,
we have $x.\alpha=y.\alpha$ if and only if $x=y$ for $x,y\in\mathbb{Z}[G]$.
\end{lem}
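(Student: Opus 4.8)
The plan is to reduce the statement directly to the $\mathbb{Z}$-linear independence of the conjugates $\{\sigma(\alpha)\mid\sigma\in G\}$, which is built into the hypothesis that they form a normal integral basis. First I would take an arbitrary element $x=\sum_{\sigma\in G}n_\sigma\sigma\in\mathbb{Z}[G]$ with $n_\sigma\in\mathbb{Z}$, and compute its action on $\alpha$ using the definition of the module structure recalled just above, obtaining $x.\alpha=\sum_{\sigma\in G}n_\sigma\,\sigma(\alpha)$. This expresses $x.\alpha$ as an integral linear combination of the $\sigma(\alpha)$.

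Since $\{\sigma(\alpha)\mid\sigma\in G\}$ is by assumption a $\mathbb{Z}$-basis of $\mathcal{O}_K$, the only way such an integral linear combination can vanish is for every coefficient to vanish; this is precisely the uniqueness of the representation of $0$ in a free $\mathbb{Z}$-module with respect to a chosen basis. Hence $x.\alpha=0$ forces $n_\sigma=0$ for all $\sigma\in G$, that is $x=0$, which gives $\mathrm{Ann}_{\mathbb{Z}[G]}(\alpha)=0$. The reverse implication $x=0\Rightarrow x.\alpha=0$ is immediate from linearity, so the first equivalence follows.

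For the final assertion I would use the $\mathbb{Z}$-linearity of the action: the equality $x.\alpha=y.\alpha$ is equivalent to $(x-y).\alpha=0$, and applying the annihilator statement just proved to the element $x-y\in\mathbb{Z}[G]$ yields $x-y=0$, i.e. $x=y$. I do not anticipate any genuine obstacle here; the single point that requires care is to invoke the \emph{basis} property rather than mere spanning or generation of $\mathcal{O}_K$. Linear independence over $\mathbb{Z}$ is exactly what the argument needs, and it is available because a normal integral basis is in particular a $\mathbb{Z}$-basis, so that $\mathcal{O}_K$ is free of rank $|G|=[K:\mathbb{Q}]$ on the conjugates $\sigma(\alpha)$.
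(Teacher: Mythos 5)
Your proof is correct and follows exactly the paper's approach: the paper disposes of the lemma in one line by appealing to the $\mathbb{Z}$-linear independence of the conjugates $\{\sigma(\alpha)\mid\sigma\in G\}$, which is precisely the basis property you isolate and exploit. Your version merely spells out the coefficient computation and the reduction of $x.\alpha=y.\alpha$ to $(x-y).\alpha=0$, both of which the paper leaves implicit.
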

\begin{proof}
The assertion follows immediately since the set
$\{\sigma(\alpha)\mid \sigma\in G\}$
is linearly independent over $\mathbb{Z}$.
\end{proof}
\begin{lem}\label{lem:2NIB}
Assume that both $\alpha_1\ (\in\mathcal{O}_K)$ and $\alpha_2\ (\in\mathcal{O}_K)$  are generators of NIBs of $K$.
We have $\alpha_1=u.\alpha_2$ for some $u\in\mathbb{Z}[G]^\times$. 
\end{lem}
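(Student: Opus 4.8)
The plan is to exploit the fact that the action defined above makes $\mathcal{O}_K$ a (left) $\mathbb{Z}[G]$-module, and that saying $\alpha$ is a generator of a NIB is precisely saying that $\alpha$ freely generates $\mathcal{O}_K$ as a $\mathbb{Z}[G]$-module. From this point of view the statement is the familiar fact that any two free generators of a rank-one free module differ by a unit of the base ring. Concretely, I would first record the module axiom $(xy).a = x.(y.a)$ for $x,y\in\mathbb{Z}[G]$ and $a\in K$, which follows directly from the definition of the action since $\sigma(\tau(a)) = (\sigma\tau)(a)$; this associativity is exactly what lets us compose the two change-of-basis operators below.

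Next I would produce the candidate unit and its inverse. Since $\{\sigma(\alpha_2)\mid\sigma\in G\}$ is a $\mathbb{Z}$-basis of $\mathcal{O}_K$ and $\alpha_1\in\mathcal{O}_K$, there are integers $n_\sigma$ with $\alpha_1=\sum_{\sigma\in G}n_\sigma\,\sigma(\alpha_2)$; setting $u:=\sum_{\sigma\in G}n_\sigma\sigma\in\mathbb{Z}[G]$ gives $\alpha_1=u.\alpha_2$, which is already the displayed equation. Running the same argument with the roles of $\alpha_1$ and $\alpha_2$ interchanged (using that $\{\sigma(\alpha_1)\}$ is also a $\mathbb{Z}$-basis and $\alpha_2\in\mathcal{O}_K$) yields some $v\in\mathbb{Z}[G]$ with $\alpha_2=v.\alpha_1$.

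It then remains to show $u$ is a two-sided unit with inverse $v$. Using the associativity recorded above, $\alpha_1=u.\alpha_2=u.(v.\alpha_1)=(uv).\alpha_1=1_G.\alpha_1$, and symmetrically $\alpha_2=(vu).\alpha_2=1_G.\alpha_2$. Now I would invoke Lemma~\ref{lem:NB}: applied to the NIB generator $\alpha_1$ it forces $uv=1_G$, and applied to $\alpha_2$ it forces $vu=1_G$. Hence $u\in\mathbb{Z}[G]^\times$ and $\alpha_1=u.\alpha_2$, as required.

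I do not expect a genuine obstacle here; the argument is short and purely formal. The only point that must not be glossed over is that one needs both $uv=1_G$ and $vu=1_G$ to conclude $u$ is a unit, since $\mathbb{Z}[G]$ need not be assumed commutative a priori (although for $G$ abelian, as in our cyclic cubic setting, one equation would suffice). This is precisely why I construct both $u$ and $v$ and apply Lemma~\ref{lem:NB} to each of the two generators rather than to only one.
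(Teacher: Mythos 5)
Your proof is correct and takes essentially the same route as the paper's: write each generator in terms of the other to obtain $u,v\in\mathbb{Z}[G]$ with $\alpha_1=u.\alpha_2$ and $\alpha_2=v.\alpha_1$, compose to get $\alpha_1=(uv).\alpha_1$, and invoke Lemma~\ref{lem:NB} to conclude $uv=1_G$. Your additional symmetric step giving $vu=1_G$ (needed a priori when $G$ is nonabelian, since $\mathbb{Z}[G]$ is then noncommutative) is a small refinement the paper omits, as it concludes $u\in\mathbb{Z}[G]^\times$ directly from $uv=1_G$.
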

\begin{proof}
From the assumption, there exist $v,w\in\mathbb{Z}[G]$ satisfying
$\alpha_2=v.\alpha_1$ and $\alpha_1=u.\alpha_2$.
Hence, we have $\alpha_1=u.\alpha_2=u.(v.\alpha_1)=(uv).\alpha_1$.
From Lemma $\ref{lem:NB}$, we have $uv=1$,
and similarly $vu=1$. We conclude that  $u\in\mathbb{Z}[G]^\times$.
\end{proof}
From Lemmas \ref{lem:NB} and \ref{lem:2NIB}, we have the following lemma.
\begin{lem}\label{lem:all NIB}
Assume that $\alpha\ (\in\mathcal{O}_K)$ is a generator of an NIB of $K$.
We have 
\[
\{x\in\calO_K \ |\ \text{a generator of an NIB of } K\}
=
\{u.\alpha\mid u\in\mathbb{Z}[G]^{\times}\},
\]
and there is a one-to-one correspondence between generators
of NIBs of $K$ and elements of $\mathbb{Z}[G]^\times$.
\end{lem}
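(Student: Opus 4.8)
The plan is to prove both assertions simultaneously by showing that the map $u \mapsto u.\alpha$ from $\mathbb{Z}[G]^\times$ to $\mathcal{O}_K$ is a bijection onto the set of generators of $\mathrm{NIB}$ of $K$. Since Lemmas \ref{lem:NB} and \ref{lem:2NIB} already package the essential module-theoretic content, the work reduces to three checks: that $u.\alpha$ is again a generator of $\mathrm{NIB}$ (so the map lands in the claimed target), that the map is injective, and that it is surjective. The image of the map is exactly $\{u.\alpha \mid u \in \mathbb{Z}[G]^\times\}$, so well-definedness together with surjectivity will yield the asserted set equality, while injectivity upgrades this to a one-to-one correspondence.

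First I would record the structural fact underlying everything: the action map $\phi\colon \mathbb{Z}[G] \to \mathcal{O}_K$, $\phi(x) = x.\alpha$, is a $\mathbb{Z}$-module isomorphism. It is $\mathbb{Z}$-linear by the definition of the action, injective by Lemma \ref{lem:NB} (trivial annihilator), and surjective because it carries the $\mathbb{Z}$-basis $\{\sigma \mid \sigma \in G\}$ of $\mathbb{Z}[G]$ onto the $\mathrm{NIB}$ $\{\sigma(\alpha) \mid \sigma \in G\}$ of $\mathcal{O}_K$. More generally, an element $\beta \in \mathcal{O}_K$ generates an $\mathrm{NIB}$ precisely when the map $x \mapsto x.\beta$ sends $\{\sigma\}$ to a $\mathbb{Z}$-basis, that is, precisely when this map is a $\mathbb{Z}$-module isomorphism; I would use this reformulation in the next step.

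The well-definedness step is the one I expect to be the only genuine obstacle, since it is the single point where the unit hypothesis on $u$ is actually used. Here I would invoke the associativity of the action, $\sigma.(u.\alpha) = (\sigma u).\alpha$, to express the map $x \mapsto x.(u.\alpha)$ as the composite $\phi \circ r_u$, where $r_u\colon \mathbb{Z}[G] \to \mathbb{Z}[G]$ is right multiplication $x \mapsto xu$. Because $u$ is a unit, $r_u$ is a $\mathbb{Z}$-module automorphism with inverse $r_{u^{-1}}$; composing it with the isomorphism $\phi$ shows that $x \mapsto x.(u.\alpha)$ is again a $\mathbb{Z}$-module isomorphism. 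Hence $\{\sigma(u.\alpha) \mid \sigma \in G\}$ is a $\mathbb{Z}$-basis of $\mathcal{O}_K$ and $u.\alpha$ is a generator of $\mathrm{NIB}$. (Concretely this is the statement that the regular-representation matrix of a unit of $\mathbb{Z}[G]$ has determinant $\pm 1$, but the functional formulation above sidesteps any determinant computation.)

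Finally, injectivity is immediate from the ``especially'' clause of Lemma \ref{lem:NB}: if $u.\alpha = u'.\alpha$ with $u, u' \in \mathbb{Z}[G]^\times$, then $u = u'$. Surjectivity is exactly Lemma \ref{lem:2NIB}, applied with $\alpha_2 = \alpha$: any generator $\beta$ of $\mathrm{NIB}$ satisfies $\beta = u.\alpha$ for some $u \in \mathbb{Z}[G]^\times$, so $\beta$ lies in the image. Combining the three checks, $u \mapsto u.\alpha$ is a bijection between $\mathbb{Z}[G]^\times$ and the set of generators of $\mathrm{NIB}$ of $K$, which gives both the set equality and the claimed one-to-one correspondence.
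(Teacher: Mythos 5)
Your proof is correct and takes essentially the same route as the paper, whose entire argument is the one-line derivation ``From Lemmas \ref{lem:NB} and \ref{lem:2NIB}, we have the following lemma'': your injectivity step is the ``especially'' clause of Lemma \ref{lem:NB} and your surjectivity step is Lemma \ref{lem:2NIB}, exactly as the paper intends. If anything, you are more complete than the paper, since your well-definedness argument (that $u.\alpha$ is again a generator of $\mathrm{NIB}$, via the composite $\phi \circ r_u$ with $r_u$ an automorphism of $\mathbb{Z}[G]$ for a unit $u$) makes explicit the one inclusion the paper leaves implicit.
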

If $G=\langle\sigma\rangle$ is a cyclic group of order $3$,
then we have
\[
\mathbb{Z}[G]^\times=\{\pm1_G,\pm\sigma,\pm\sigma^2\},
\]
(\cite[p.\,2932 and 2933]{Ar}).
\section{Integral bases of the simplest cubic fields}
\label{sect:IB}
Let $K$ be a cubic field.
We denote the conjugates of $\alpha\in K$ by $\alpha,\alpha',\alpha''$.
For $\alpha_1,\alpha_2,\alpha_3\in K$, put
\[
d(\alpha_1,\alpha_2,\alpha_3):=
\left|
\begin{array}{ccc}
\alpha_1 & \alpha_2 & \alpha_3
\\
\alpha_1' & \alpha_2' & \alpha_3'
\\
\alpha_1'' & \alpha_2'' & \alpha_3''
\end{array}
\right|^2.
\]
If $\{\alpha_1,\alpha_2,\alpha_3\}$ is an integral basis of $K$,
then $d(\alpha_1,\alpha_2,\alpha_3)$ is the $discriminant$ of $K$ and
denote it by $D_K=d(\alpha_1,\alpha_2,\alpha_3)$.
\\
\quad
Let $\rho_n$ be a root of Shanks' cubic polynomial $f_n(X)$.
If $\Delta_n=n^2+3n+9$ is a prime number, then $\{1,\rho_n,\rho_n^2\}$ is
an integral basis and hence $D_{L_n}=d(1,\rho_n,\rho_n^2)$.
In this section, we give an integral basis of the simplest cubic field
$L_n$ in the case that $L_n/\mathbb{Q}$ is tamely ramified
(that is, $3\nmid n$ or $n\equiv12 \pmod{27}$).
We write $\Delta_n=n^2+3n+9=bc^3$ with $b,c\in\mathbb{Z}_{>0}$ where $b$
is cube-free, and furthermore $b=de^2$ with $d,e\in\mathbb{Z}_{>0}$ where
$d$ and $e$ are square-free and $(d,e)=1$.
We use the well-known algorithm for finding an integral basis for cubic fields 
(see \cite{Al}, \cite{D}, \cite{H} and \cite{V}).
\begin{lem}\label{lem:H}
{\cite[p.\,15, Theorem\,1.5]{H}}
Let $g(X)=X^3+b_1X^2+c_1X+d_1$ be an irreducible polynomial in
$\mathbb{Z}[X]$, and $\rho$ be a root of $g(X)$.
Put $K=\mathbb{Q}(\rho)$.
Suppose that $s,a,t\in\mathbb{Z}$ satisfy the following conditions
$(\mathrm{i})$ and $(\mathrm{ii})$.
\[
(\mathrm{i})\ 
d(1,\rho_n,\rho_n^2)=s^6a^2D_K,
\hspace{3mm}
\]
\begin{eqnarray*}
(\mathrm{ii})\ 
\dfrac{1}{2}g''(t)
\!\!\!&\equiv&\!\!\!
0 \pmod{s},
\\
\ \,
g'(t)
\!\!\!&\equiv&\!\!\!
0 \pmod{s^2a},
\\
\ \,\,
g(t)
\!\!\!&\equiv&\!\!\!
0 \pmod{s^3a^2},
\end{eqnarray*}
where $g'(t)\ (resp.\ g''(t))$ is the derivative
$(resp.\ the\ second\ derivative)$ of $g(t)$.
Put
\begin{eqnarray*}
\phi
\!\!\!&:=&\!\!\!
\frac{1}{s}(\rho-t),
\\
\psi
\!\!\!&:=&\!\!\!
\frac{1}{s^2a}(\rho^2+(t+b_1)\rho+t^2+b_1t+c_1).
\end{eqnarray*}
Then $\{1,\phi,\psi\}$ is an integral basis of $K$.
\end{lem}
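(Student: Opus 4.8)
The plan is to show that $\{1,\phi,\psi\}$ consists of algebraic integers and that its discriminant equals $D_K$; the standard index argument for lattices then forces this set to be an integral basis. First I would record the change-of-basis matrix from $\{1,\rho,\rho^2\}$ to $\{1,\phi,\psi\}$. Reading the coefficients off the definitions, $\phi=-\tfrac{t}{s}\cdot1+\tfrac1s\rho$ and $\psi=\tfrac{t^2+b_1t+c_1}{s^2a}\cdot1+\tfrac{t+b_1}{s^2a}\rho+\tfrac{1}{s^2a}\rho^2$, so the matrix is lower triangular with diagonal entries $1,\ 1/s,\ 1/(s^2a)$ and determinant $1/(s^3a)$. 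Since discriminants transform by the square of this determinant, condition $(\mathrm{i})$ gives
\[
d(1,\phi,\psi)=\frac{1}{s^6a^2}\,d(1,\rho,\rho^2)=\frac{1}{s^6a^2}\cdot s^6a^2D_K=D_K.
\]

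Next I would verify $\phi\in\calO_K$. Writing $\rho=s\phi+t$, the element $\phi$ is a root of $g(sX+t)/s^3$; expanding and using $g'(X)=3X^2+2b_1X+c_1$ together with $\tfrac12 g''(X)=3X+b_1$, this monic polynomial equals
\[
X^3+\frac{\tfrac12 g''(t)}{s}X^2+\frac{g'(t)}{s^2}X+\frac{g(t)}{s^3}.
\]
The three congruences in $(\mathrm{ii})$ make every coefficient an integer (indeed the last two are divisible by more than is needed here), so $\phi$ is an algebraic integer.

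The crux, and the step I expect to be the main obstacle, is proving $\psi\in\calO_K$. Here I would exploit the factorization
\[
\rho^2+(t+b_1)\rho+(t^2+b_1t+c_1)=\frac{g(\rho)-g(t)}{\rho-t}=\frac{-g(t)}{\rho-t},
\]
valid because $g(\rho)=0$. Substituting $\rho-t=s\phi$ and setting $k:=-g(t)/(s^3a^2)\in\Z$, which is an integer by the third congruence, yields the clean relation $\psi=ka/\phi$, equivalently $\phi\psi=ka$. Writing the minimal polynomial of $\phi$ from the previous paragraph with $P:=\tfrac12 g''(t)/s\in\Z$, $Q':=g'(t)/(s^2a)\in\Z$ (an integer by the second congruence), and $g(t)/s^3=-ka^2$, so that $\phi^3+P\phi^2+aQ'\phi-ka^2=0$, I would substitute $\phi=ka/\psi$ and clear denominators to obtain the monic relation
\[
\psi^3-Q'\psi^2-Pk\,\psi-k^2a=0,
\]
whose coefficients $Q',P,k,a$ are all integers; hence $\psi\in\calO_K$.

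Finally, since $1,\phi,\psi$ lie in $\calO_K$ and are $\Q$-linearly independent, they span a full sublattice $M\subseteq\calO_K$ with $d(1,\phi,\psi)=[\calO_K:M]^2D_K$. The discriminant computation of the first paragraph forces $[\calO_K:M]=1$, so $M=\calO_K$ and $\{1,\phi,\psi\}$ is an integral basis of $K$.
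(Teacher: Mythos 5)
Your proposal is correct, and in fact the paper contains no proof of this lemma at all: it is quoted verbatim from Hambleton--Williams \cite[p.\,15, Theorem\,1.5]{H}, so your argument supplies a proof the paper only cites. The structure you use --- (a) integrality of $\phi$ via the Taylor expansion $g(sX+t)/s^3=X^3+\frac{g''(t)/2}{s}X^2+\frac{g'(t)}{s^2}X+\frac{g(t)}{s^3}$, (b) integrality of $\psi$ via the identity $\rho^2+(t+b_1)\rho+t^2+b_1t+c_1=-g(t)/(\rho-t)$ and the reciprocal-polynomial trick yielding $\psi^3-Q'\psi^2-Pk\psi-k^2a=0$, and (c) the index computation $d(1,\phi,\psi)=s^{-6}a^{-2}\,d(1,\rho,\rho^2)=D_K$ forcing $[\calO_K:\Z+\phi\Z+\psi\Z]=1$ --- is the standard route to this result, and each step checks out: the change-of-basis matrix is indeed lower triangular with determinant $1/(s^3a)$, and your substitution $\phi=ka/\psi$ into $\phi^3+P\phi^2+aQ'\phi-ka^2=0$ produces exactly the monic integral relation you state. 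Two small points deserve an explicit word, since your divisions silently use them: $s,a\neq0$ because condition (i) equates $s^6a^2D_K$ with the nonzero discriminant $d(1,\rho,\rho^2)$ of a $\Q$-basis, and $g(t)\neq0$ (hence $k\neq0$ and $\phi\psi=ka\neq0$) because an integer root of $g$ would contradict irreducibility; with these noted, dividing by $\phi$ and by $ka^2$ is legitimate and the proof is complete.
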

From Lemma\,\ref{lem:H}, we can give an integral basis of the simplest
cubic field $L_n$ as follows.
\begin{prop}\label{prop:IB Ln}
Let $n$ be an integer with $3\nmid n$ or $n\equiv12 \pmod{27}$
and $\Delta_n=n^2+3n+9=de^2c^3$ with $d,e,c\in\mathbb{Z}_{>0}$
where $d$ and $e$ are square-free and $(d,e)=1$.
Put
\[
t:=
\begin{cases}
un, & \mathrm{if}\ 3\nmid n, 
\\ 
\frac{n}{3}, & \mathrm{if}\ n\equiv12 \pmod{27},
\end{cases}
\]
where $u$ is an integer satisfying $3u\equiv1 \pmod{e^2c^3}$.
Put
\begin{eqnarray*}
\phi
\!\!\!&:=&\!\!\!
\frac{1}{c}(\rho_n-t),
\vspace{1mm}
\\
\psi
\!\!\!&:=&\!\!\!
\frac{1}{c^2e}(\rho_n^2+(t-n)\rho_n+t^2-nt-n-3).
\end{eqnarray*}
Then $\{1,\phi,\psi\}$ is an integral basis of $L_n=\mathbb{Q}(\rho_n)$.
\end{prop}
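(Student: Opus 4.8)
The plan is to apply Lemma \ref{lem:H} to $g=f_n$, so that $b_1=-n$, $c_1=-(n+3)$, $d_1=-1$, taking $s=c$ and $a=e$. With these choices the candidate basis elements of Lemma \ref{lem:H} are exactly the $\phi,\psi$ in the statement, since $\rho_n^2+(t+b_1)\rho_n+t^2+b_1t+c_1=\rho_n^2+(t-n)\rho_n+t^2-nt-n-3$. It therefore suffices to verify hypotheses $(\mathrm{i})$ and $(\mathrm{ii})$ of Lemma \ref{lem:H} for these $s,a,t$.

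For $(\mathrm{i})$ I would first record $d(1,\rho_n,\rho_n^2)=d(f_n)=\Delta_n^2=(de^2c^3)^2=d^2e^4c^6$, and then compute $D_{L_n}$ in the tame case. The key observation is that $3\nmid b$ in both subcases: if $3\nmid n$ then $\Delta_n\equiv1\pmod3$, while if $n\equiv12\pmod{27}$ a short computation gives $3^3\,\|\,\Delta_n$; in either case the factor $3$ (when present) is absorbed into $c^3$, so $b=de^2$ is prime to $3$. By the conductor formula (\ref{eq:cond}) with $\gamma=1$ we then obtain $\mathfrak{f}_{L_n}=\prod_{p\mid b}p=de$ (using that $d,e$ are coprime and square-free), whence $D_{L_n}=\mathfrak{f}_{L_n}^2=d^2e^2$ by (\ref{eq:Df}). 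Consequently $s^6a^2D_{L_n}=c^6e^2\cdot d^2e^2=d^2e^4c^6=d(1,\rho_n,\rho_n^2)$, which is precisely $(\mathrm{i})$.

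For $(\mathrm{ii})$ the three congruences to check are $3t-n\equiv0\pmod{c}$, $3t^2-2nt-(n+3)\equiv0\pmod{c^2e}$, and $f_n(t)\equiv0\pmod{c^3e^2}$. The computational engine is the single polynomial identity $-2n^3-9n^2-27n-27=-(2n+3)\Delta_n$ (division by $\Delta_n=n^2+3n+9$), together with the elementary divisibilities $e^2c^3\mid\Delta_n$ and $c^2e\mid\Delta_n$. In the case $3\nmid n$, the relation $3u\equiv1\pmod{e^2c^3}$ yields $3t\equiv n\pmod{e^2c^3}$; reducing $3\big(3t^2-2nt-(n+3)\big)$ and $27f_n(t)$ via $3t\equiv n$ turns them into $-\Delta_n$ and $-(2n+3)\Delta_n$ modulo $c^3e^2$, both of which vanish, and since $\gcd(3,c^3e^2)=1$ the auxiliary factors of $3$ may be cancelled. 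In the case $n\equiv12\pmod{27}$ one has $3t=n$ exactly, so $3t^2-2nt-(n+3)=-\Delta_n/3$ and $f_n(t)=-(2n+3)\Delta_n/27$ are exact equalities; here $3\,\|\,c$ and $27\mid(2n+3)$, and matching these $3$-adic valuations against $c^2e$ and $c^3e^2$ gives the two congruences, while $3t-n=0$ settles the first.

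I expect the main obstacle to be the third congruence $f_n(t)\equiv0\pmod{c^3e^2}$, where the factorization identity must be combined with careful $3$-adic bookkeeping — in particular confirming $3^3\,\|\,\Delta_n$ and $27\mid(2n+3)$ in the subcase $n\equiv12\pmod{27}$, and cancelling the extra factor of $27$ against $c^3e^2$. By contrast, identity $(\mathrm{i})$ is essentially supplied by the conductor formula once $3\nmid b$ is established, so the real work lies in treating these congruences uniformly across the two ramification cases.
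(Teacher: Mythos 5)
Your proposal is correct and follows essentially the same route as the paper: apply Lemma \ref{lem:H} with $s=c$, $a=e$, deduce condition (i) from $D_{L_n}=d^2e^2$ via \eqref{eq:Df} and \eqref{eq:cond}, and verify the congruences (ii) using $\Delta_n\equiv0$, $n^3\equiv27 \pmod{c^3e^2}$ together with the identity $2n^3+9n^2+27n+27=(2n+3)\Delta_n$ and the facts $3\,\|\,c$, $27\mid 2n+3$ in the wild-looking subcase. In fact you supply the explicit reduction (multiplying by $3$ and $27$ and cancelling these coprime factors) for the case $3\nmid n$, which the paper leaves as a one-line remark.
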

\begin{proof}
Since $3\nmid n$ or $n\equiv12 \pmod{27}$,
we have $D_{L_n}=d^2e^2$ 
from (\ref{eq:Df}) and (\ref{eq:cond}), and hence
\[
d(1,\rho_n,\rho_n^2)=\Delta_n^2=d^2e^4c^6=e^2c^6D_{L_n}.
\]
Therefore, it is enough to show the condition (ii) :
\begin{equation}\label{eq:f-cong}
\begin{split}
\frac{1}{2}f_n''(t) &\equiv0 \pmod{c},
\\
f_n'(t) &\equiv0 \pmod{c^2e},
\\
f_n(t) &\equiv0 \pmod{c^3e^2},
\end{split}
\end{equation}
in Lemma\,\ref{lem:H}.
\\
\quad
First, we have
\begin{equation}\label{eq:Delta}
\Delta_n=n^2+3n+9\equiv0 \pmod{c^3e^2},
\end{equation}
\begin{equation}\label{eq:n^3}
n^3=(n-3)(n^2+3n+9)+27\equiv27 \pmod{c^3e^2}.
\end{equation}
\quad
Suppose that $3\nmid n$.
From (\ref{eq:Delta}), (\ref{eq:n^3}) and $3u\equiv1 \pmod{c^3e^2}$,
we can show the congruences 
(\ref{eq:f-cong}) for $t=un$.
\\
\quad
Suppose that $n\equiv12 \pmod{27}$.
We have $3^3 || \Delta_n$ and hence $3||c$.
Let $t=n/3$.
From $n^2=\Delta_n-3n-9=de^2c^3-3n-9$
and $n^3=(n-3)\Delta_n+27=de^2c^3(n-3)+27$,
we have 
\begin{eqnarray*}
\frac{1}{2}f_n''(t)
\!\!\!&=&\!\!\!
3t-n=0,
\\
f_n'(t)
\!\!\!&=&\!\!\!
-\frac{1}{3}n^2-n-3=-\frac{1}{3}de^2c^3\equiv0\ 
\pmod{c^2e},
\\
f_n(t)
\!\!\!&=&\!\!\!
-\frac{2}{27}n^3-\frac{1}{3}n^2-n-1=-\frac{1}{27}de^2c^3(2n+3)\equiv0\ 
\pmod{c^3e^2}.
\end{eqnarray*}
For the last congruence of $f_n''(t)$,
we use $2n+3\equiv0 \pmod{27}$
since $n\equiv12 \pmod{27}$.
\end{proof}
\begin{rem}\label{rem:Gr-KS}
The cyclic cubic fields with a power integral basis are determined by Gras\,\cite{G}. 
Kashio and Sekigawa\,\cite{KS} simplified the necessary and sufficient conditions for having a power 
integral basis and gave its generator explicitly.
\end{rem}
\section{Main Theorems}\label{sect:main}

Let $n$ be an integer and $f_n(X)$ be the Shanks' cubic polynomial defined in
\S\ref{sect:intro}, 
$\rho_n\in\mathbb{C}$  a root of $f_n(X)$. Put $L_n:=\mathbb{Q}(\rho_n)$
and $G:={\rm Gal}(L_n/\mathbb Q)=\langle \sigma \rangle$.
We denote the conjugates of $\rho_n$ by $\rho_n,\ \rho'_n:=\sigma (\rho_n)$\ and $\rho''_n:=\sigma^2 (\rho_n)$.
In this section, we give all normal integral bases 
by the roots of $f_n(X)$ for the simplest cubic
fields $L_n$ which are tamely ramified over $\mathbb{Q}$.
We construct the generator of an $\mathrm{NIB}$ from a normal basis and the
integral basis given by Proposition\,\ref{prop:IB Ln}.
The idea is based on the method in \cite{Ac}.
Let $\zeta:=\zeta_3=e^{2\pi i/3}$ be a primitive cube root of unity,
and put $A_n:=n+3(1+\zeta)=n-3\zeta^2$.
We have $A_nA_n'=\Delta_n$, where $A_n'=n+3(1+\zeta^2)=n-3\zeta$ is the
conjugate of $A_n$.
\begin{lem}\label{lem:p01}
If $p$ is a prime number satisfying $p\mid\Delta_n$,
then we have $p\equiv0,1 \pmod{3}$.
\end{lem}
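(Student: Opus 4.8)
The plan is to prove the equivalent statement: if $p$ is a prime with $p \mid \Delta_n$ and $p \neq 3$, then $p \equiv 1 \pmod 3$. Since $\Delta_n = n^2 + 3n + 9$, the key algebraic identity is the completion of the square
\[
4\Delta_n = 4n^2 + 12n + 36 = (2n+3)^2 + 27,
\]
which converts the divisibility hypothesis into a statement about quadratic residues.

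First I would record that $p \nmid 2n+3$. Indeed, if $p \mid 2n+3$, then the identity above forces $p \mid 27$, hence $p = 3$, contrary to assumption. Consequently, reducing $4\Delta_n \equiv 0 \pmod p$ yields $(2n+3)^2 \equiv -27 \pmod p$ with $2n+3$ invertible modulo $p$, so $-27$ is a nonzero quadratic residue modulo $p$. Since $-27 = -3 \cdot 3^2$ and $3^2$ is a square, this is equivalent to $\left(\dfrac{-3}{p}\right) = 1$. The conclusion then follows from the standard criterion that, for a prime $p \neq 3$, one has $\left(\dfrac{-3}{p}\right) = 1$ if and only if $p \equiv 1 \pmod 3$ (equivalently, $p$ splits in $\mathbb{Q}(\zeta) = \mathbb{Q}(\sqrt{-3})$); this gives $p \equiv 1 \pmod 3$.

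An alternative route, perhaps more in keeping with the setup just introduced, uses the factorization $A_nA_n' = \Delta_n$. Since $\Delta_n$ is the norm of $A_n$ from the Eisenstein integers $\mathbb{Z}[\zeta]$, and a prime $p \equiv 2 \pmod 3$ is inert in $\mathbb{Z}[\zeta]$, such a $p$ could divide $\Delta_n = A_nA_n'$ only by dividing $A_n$ (or $A_n'$) itself; writing $A_n = (n+3) + 3\zeta$ and reading off the coefficient of $\zeta$, this forces $p \mid 3$, hence $p = 3$. Thus no prime congruent to $2$ modulo $3$ divides $\Delta_n$, which is exactly the claim.

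I do not expect a serious obstacle here, as the lemma is elementary: the only points that require care are the edge case $p \mid 2n+3$ (resolved above, and the reason the prime $p = 3$ must be allowed in the conclusion) and the invocation of the classical criterion for $-3$ to be a quadratic residue. I would present the quadratic-residue argument as the main proof for its self-containedness, and mention the Eisenstein-integer factorization as the conceptual explanation, since it also clarifies why $\Delta_n$ is built from norms in $\mathbb{Z}[\zeta]$.
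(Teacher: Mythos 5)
Your proposal is correct, and your ``alternative route'' is in fact exactly the paper's proof: the authors suppose $p\equiv -1\pmod 3$, note that $p$ is then inert in $\mathbb{Q}(\zeta)$, deduce $p\mid A_n$ from $p\mid \Delta_n=A_nA_n'$, and read off coefficients with respect to the $\mathbb{Z}$-basis $\{1,\zeta\}$ of $\mathbb{Z}[\zeta]$ in $A_n=(n+3)+3\zeta$ to get $p\mid n+3$ and $p\mid 3$, a contradiction. Your primary argument, via the identity $4\Delta_n=(2n+3)^2+27$ and the classical criterion $\left(\frac{-3}{p}\right)=1$ if and only if $p\equiv 1\pmod 3$, is a genuinely different and more elementary route: it stays entirely inside $\mathbb{Z}$ and quadratic reciprocity, and makes no use of $\mathbb{Z}[\zeta]$. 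What the paper's route buys in exchange is coherence with the surrounding section: the element $A_n$ and the norm factorization $\Delta_n=A_nA_n'$ are introduced precisely so they can be reused immediately afterwards (in the lemma producing the pairs $a_0+a_1\zeta$ with $a_0^2-a_0a_1+a_1^2\mid \Delta_n$), so the inert-prime argument is the natural one in context. One small patch your quadratic-residue argument needs: the Legendre-symbol criterion presupposes $p$ odd, and for $p=2$ the congruence $(2n+3)^2\equiv -27\pmod 2$ is vacuously satisfied (since $-27\equiv 1$ is a square mod $2$), so $p=2$ must be excluded separately --- easily done by observing that $\Delta_n=n(n+3)+9$ is always odd because $n(n+3)$ is even. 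The Eisenstein-integer argument handles $p=2$ uniformly along with all other primes $\equiv 2\pmod 3$, which is a further mild advantage of the paper's approach.
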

\begin{proof}
Assume that $p\mid\Delta_n$ and $p\equiv-1 \pmod{3}$.
Since $p$ is inert in $\mathbb{Q}(\zeta)$,
we have $p\mid A_n$ and $p\mid A_n'$.
However, since $\{1,\zeta\}$ is a basis of $\mathbb{Z}[\zeta]$ over
$\mathbb{Z}$ and $p\mid A_n=(n+3)+3\zeta$,
we have $p\mid n+3$ and $p\mid 3$.
This is a contradiction.
\end{proof}
For the roots $\rho_n,\rho_n',\rho_n''$ of Shanks' cubic polynomial
$f_n(X)$, we have $\rho_n'=-1/(1+\rho_n)$ and $\rho_n''=-1/(1+\rho_n')$
(\cite[p.\,1138]{S}).
We have the following.
\\
\begin{align}
& \rho_n^2\rho_n'+\rho_n'^2\rho_n''+\rho_n''^2\rho_n=3, \label{eq:roots1} \\
& \rho_n^2\rho_n''+\rho_n'^2\rho_n+\rho_n''^2\rho_n'=-n^2-3n-6, \label{eq:roots2} \\
& \rho_n^2\rho_n'+\rho_n'^2\rho_n''+\rho_n''^2\rho_n-\rho_n^2\rho_n''-\rho_n'^2\rho_n-\rho_n''^2\rho_n'=\Delta_n, \label{eq:roots3} \\
& \rho_n'=\rho_n^2-(n+1)\rho_n-2, \label{eq:roots4} \\
& \rho_n''=-\rho_n^2+n\rho_n+(n+2). \label{eq:roots5} 
\end{align}
\begin{lem}\label{lem:3eta}
Let $n\in\mathbb{Z}$ and $r_1,r_2,r_3\in\mathbb{Q}$.
For $\eta=r_1\rho_n+r_2\rho_n'+r_3$,
we have the following.
\\
\begin{itemize}
\item[(1)]
$\eta+\eta'+\eta''=n(r_1+r_2)+3r_3$.
\\
\item[(2)]
$\eta\eta'+\eta'\eta''+\eta''\eta
=r_1r_2n^2+2(r_1+r_2)r_3n-(r_1^2-r_1r_2+r_2^2)(n+3)+3r_3^2$.
\\
\item[(3)]
$\eta\eta'\eta''
=r_1r_2r_3n^2-r_1^2r_2n(n+3)+(r_1+r_2)r_3^2n-(r_1^2-r_1r_2+r_2^2)r_3(n+3)
\vspace{1.5mm}
\\
\hspace{13mm}
+r_1r_2(r_1-r_2)(n^2+3n+6)+r_1^3+r_2^3+r_3^3-3r_1^2r_2$.
\end{itemize}
\end{lem}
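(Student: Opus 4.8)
The plan is to recognize that $\eta+\eta'+\eta''$, $\eta\eta'+\eta'\eta''+\eta''\eta$ and $\eta\eta'\eta''$ are, up to sign, the elementary symmetric functions of the three Galois conjugates of $\eta$, and to compute each by directly expanding and substituting the symmetric-function data of the roots $\rho_n,\rho_n',\rho_n''$. First I would record, from Vieta's formulas applied to $f_n(X)$ in (\ref{eq:fn}), that $\rho_n+\rho_n'+\rho_n''=n$, that $\rho_n\rho_n'+\rho_n'\rho_n''+\rho_n''\rho_n=-(n+3)$, and that $\rho_n\rho_n'\rho_n''=1$, together with the derived identity $\rho_n^2+\rho_n'^2+\rho_n''^2=n^2+2n+6$. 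Since $\sigma(\rho_n)=\rho_n'$, $\sigma(\rho_n')=\rho_n''$ and $\sigma(\rho_n'')=\rho_n$, the conjugates of $\eta$ are $\eta'=\sigma(\eta)=r_1\rho_n'+r_2\rho_n''+r_3$ and $\eta''=\sigma^2(\eta)=r_1\rho_n''+r_2\rho_n+r_3$; writing $a,b,c$ for $\rho_n,\rho_n',\rho_n''$ this makes the whole computation cyclic in $(a,b,c)$ with the pair $(r_1,r_2)$ rotating accordingly.

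Part (1) is then immediate: by linearity $\eta+\eta'+\eta''=r_1(a+b+c)+r_2(a+b+c)+3r_3=n(r_1+r_2)+3r_3$. For part (2) I would expand the three products $\eta\eta'$, $\eta'\eta''$, $\eta''\eta$ and collect by monomials in $r_1,r_2,r_3$. The cyclic structure guarantees that the coefficient of each of $r_1^2,r_2^2,r_1r_2,r_1r_3,r_2r_3,r_3^2$ is a fully symmetric function of $a,b,c$: the $r_1^2$ and $r_2^2$ coefficients are $ab+bc+ca=-(n+3)$, the $r_1r_2$ coefficient is $(a^2+b^2+c^2)+(ab+bc+ca)=n^2+n+3$, the $r_1r_3$ and $r_2r_3$ coefficients are $2(a+b+c)=2n$, and the $r_3^2$ coefficient is $3$. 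Substituting and regrouping yields the stated polynomial; this step is short and purely symmetric.

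Part (3), the norm, is the substantive computation. I would expand the product of the three linear forms into its $27$ terms and sort the resulting monomials in $a,b,c$ by total degree. The degree $\le 2$ contributions and the $abc$-term all reduce to the symmetric functions already listed (so the degree-$2$ part collapses to $r_1r_2r_3\,n^2-(r_1^2-r_1r_2+r_2^2)r_3(n+3)$, the degree-$1$ part to $(r_1+r_2)r_3^2 n$, and the degree-$0$ part to $r_3^3$). The only delicate point is the degree-$3$ part: here the coefficient of $r_1^2r_2$ collects exactly the \emph{forward} cyclic sum $\rho_n^2\rho_n'+\rho_n'^2\rho_n''+\rho_n''^2\rho_n$, which equals $3$ by (\ref{eq:roots1}), while the coefficient of $r_1r_2^2$ collects the \emph{backward} cyclic sum $\rho_n^2\rho_n''+\rho_n'^2\rho_n+\rho_n''^2\rho_n'$, which equals $-n^2-3n-6$ by (\ref{eq:roots2}); the coefficient of $r_1^3$ and of $r_2^3$ is the single term $abc=1$. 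Assembling these with the lower-degree parts and reorganizing (using $n(n+3)=n^2+3n$ to split off the $-r_1^2r_2\,n(n+3)$ summand) reproduces the claimed expression.

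The main obstacle is purely the bookkeeping in part (3): because $\eta'$ and $\eta''$ are cyclic shifts in which the roles of $r_1$ and $r_2$ rotate, one must assign each monomial $\rho_i^2\rho_j$ to the correct one of the two \emph{inequivalent} cyclic orbits, and these two orbits take genuinely different values (the constant $3$ versus the $n$-dependent $-n^2-3n-6$). Interchanging (\ref{eq:roots1}) and (\ref{eq:roots2}) would give a wrong, spuriously symmetric answer, so the asymmetry between the forward and backward sums is exactly what must be tracked carefully; a built-in check is that the two sums add to $(a+b+c)(ab+bc+ca)-3abc=-n^2-3n-3$, consistent with $3+(-n^2-3n-6)$. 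Beyond this careful sorting, every ingredient is a standard symmetric function of the roots, so no further conceptual difficulty arises.
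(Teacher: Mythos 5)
Your proposal is correct and follows essentially the same route as the paper: the paper's proof is exactly a direct calculation from Vieta's formulas $\rho_n+\rho_n'+\rho_n''=n$, $\rho_n\rho_n'+\rho_n'\rho_n''+\rho_n''\rho_n=-(n+3)$, $\rho_n\rho_n'\rho_n''=1$ together with the cyclic-sum identity (\ref{eq:roots2}), which is precisely the expansion-and-collection you carry out (your use of (\ref{eq:roots1}) is equivalent, since the forward sum follows from (\ref{eq:roots2}) and Vieta). Your explicit tracking of the two inequivalent cyclic orbits of the cubic monomials, with the consistency check that the two sums add to $-n^2-3n-3$, correctly supplies the bookkeeping the paper leaves implicit.
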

\begin{proof}
We can show the assertions by direct calculation using
$\rho_n+\rho_n'+\rho_n''=n,\ 
\rho_n\rho_n'+\rho_n'\rho_n''+\rho_n''\rho_n=-(n+3),\ 
\rho_n\rho_n'\rho_n''=1$ 
and (\ref{eq:roots2}).
\end{proof}
\begin{lem}\label{lem:a0a1}
Let $s$ be a positive integer satisfying $s\mid\Delta_n=A_nA_n'$.
There exist exactly $6$ pairs $\{a_0,a_1\}$ of integers satisfying
$s=a_0^2-a_0a_1+a_1^2$ and $a_0+a_1\zeta\mid A_n$ in $\mathbb{Z}[\zeta]$.
Furthermore, if $\{a_0,a_1\}$ is such a pair, then all the pair are
$\{\pm a_0,\pm a_1\},\ \{\pm a_1,\mp(a_0-a_1)\}$ and
$\{\pm(a_0-a_1),\pm a_0\}$, where the double-signs are in same order.
\end{lem}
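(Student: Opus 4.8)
The plan is to translate both conditions into the arithmetic of the Eisenstein ring $\Z[\zeta]$, which is a principal ideal domain whose unit group is exactly $\{\pm1,\pm\zeta,\pm\zeta^2\}$ (six elements). Writing $\beta:=a_0+a_1\zeta$ and recalling that its complex conjugate is $a_0+a_1\zeta^2$, the identity
\[
(a_0+a_1\zeta)(a_0+a_1\zeta^2)=a_0^2-a_0a_1+a_1^2
\]
shows that the first condition $s=a_0^2-a_0a_1+a_1^2$ says precisely that the norm $N(\beta):=\beta\bar\beta$ equals $s$, while the second condition says $\beta\mid A_n$ in $\Z[\zeta]$. Since the norm is unit-invariant and divisibility is unchanged under multiplication by units, the six associates $u\beta$ ($u\in\{\pm1,\pm\zeta,\pm\zeta^2\}$) all satisfy both conditions, and they are pairwise distinct because $\beta\neq0$ (as $s\ge1$). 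Thus the whole statement reduces to the claim that, up to associates, there is exactly one $\beta\in\Z[\zeta]$ with $\beta\mid A_n$ and $N(\beta)=s$; the six pairs will then be exactly the coordinate vectors of its six associates.

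For the reduced claim I would factor $A_n$ into Eisenstein primes and exploit Lemma\,\ref{lem:p01}, which guarantees that no prime $p\equiv-1\pmod3$ (inert in $\Z[\zeta]$) divides $\Delta_n=N(A_n)$. Hence every rational prime dividing $\Delta_n$ is either split ($p\equiv1\pmod3$) or the ramified prime $3$. For a split prime $p=\pi\bar\pi$ I would show that $A_n$ is divisible by only one of the conjugate primes $\pi,\bar\pi$: if both divided $A_n$ then $p\mid A_n=(n+3)+3\zeta$, forcing $p\mid n+3$ and $p\mid3$, i.e.\ $p=3$, a contradiction. For $p=3$ the prime $\lambda:=1-\zeta$ is self-conjugate up to a unit, so comparing $\lambda$-valuations on both sides of $A_nA_n'=\Delta_n$ pins the exact power of $\lambda$ in $A_n$. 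Consequently, if $A_n=u\,\lambda^{e_3}\prod_{p\equiv1}\pi_p^{e_p}$ with $u$ a unit, then any divisor of $A_n$ has norm $3^{j}\prod p^{k_p}$ with $0\le j\le e_3$ and $0\le k_p\le e_p$, and these exponents are uniquely recovered from the prime factorization of the norm. Since $s\mid\Delta_n$ its exponents lie in the allowed ranges, so a divisor of norm $s$ exists and is unique up to units, proving the reduced claim.

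Finally I would make the correspondence explicit by computing the six associates of $\beta=a_0+a_1\zeta$ using $\zeta^2=-1-\zeta$: for instance $\zeta\beta=-a_1+(a_0-a_1)\zeta$ and $\zeta^2\beta=-(a_0-a_1)-a_0\zeta$, and likewise for $-\beta,\,-\zeta\beta,\,-\zeta^2\beta$. Reading off the coefficient pairs yields exactly $\{\pm a_0,\pm a_1\}$, $\{\pm a_1,\mp(a_0-a_1)\}$ and $\{\pm(a_0-a_1),\pm a_0\}$ with the signs in the same order, as claimed. The substantive step is the uniqueness in the middle paragraph; everything rests on Lemma\,\ref{lem:p01} ruling out inert primes, which is what forces each split prime to contribute a single conjugate factor to $A_n$ and thereby removes all ambiguity in splitting the factorization to realize a prescribed norm.
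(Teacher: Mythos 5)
Your proof is correct and follows essentially the same route as the paper's: both reduce the problem to unique factorization in $\mathbb{Z}[\zeta]$, use Lemma\,\ref{lem:p01} to exclude inert primes, conclude that a divisor of $A_n$ of norm $s$ is unique up to the six units of $\mathbb{Z}[\zeta]$, and read off the six coefficient pairs from the associates. The only difference is that you spell out details the paper leaves implicit (why exactly one of $\pi,\bar\pi$ divides $A_n$, and the $\lambda$-valuation argument for the ramified prime $3$), which is a welcome addition but not a different method.
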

\begin{proof}
From Lemma\,\ref{lem:p01}, we have $s=3^jp_1\cdots p_k$ with
$j\in\mathbb{Z}_{\geq0}$ and $p_1,\cdots,p_k$ are prime numbers with
$p_1\equiv\cdots\equiv p_k\equiv1 \pmod{3}$.
Since $\mathbb{Z}[\zeta]$ is a unique factorization domain and
$p_1,\cdots,p_k$ split in $\mathbb{Q}(\zeta)$, we can write
$p_i=\pi_i\pi_i'$ where $\pi_i$ and $\pi_i'$ are  prime elements with
$\pi_i\mid A_n$ and $\pi_i'\mid A_n'$, and the factorization is unique
up to elements of $\mathbb{Z}[\zeta]^\times=\{\pm1,\pm\zeta,\pm\zeta^2\}$.
Put $(1-\zeta)^j\pi_1\cdots\pi_k=a_0+a_1\zeta$ with $a_0,a_1\in\mathbb{Z}$.
We have 
\[
s
=3^j(\pi_1\cdots\pi_k)(\pi_1'\cdots\pi_k')
=(a_0+a_1\zeta)(a_0+a_1\zeta^2)
=a_0^2-a_0a_1+a_1^2,
\]
and $a_0+a_1\zeta\mid A_n$.
Furthermore, $a_0+a_1\zeta$ is unique up to elements of
$\mathbb{Z}[\zeta]^\times$, and all the elements are given by
$\pm(a_0+a_1\zeta),\ \pm\zeta(a_0+a_1\zeta),\ \pm\zeta^2(a_0+a_1\zeta)$.
\end{proof}
\begin{theo}\label{theo}
Let $n$ be an integer with $3\nmid n$ or $n\equiv12 \pmod{27}$,
$A_n=n+3(1+\zeta)$ and $\Delta_n=n^2+3n+9=de^2c^3$ with
$d,e,c\in\mathbb{Z}_{>0}$ where $d$ and $e$ are square-free and
$(d,e)=1$. 
Let $a_0$ and $a_1$ be integers satisfying $ec=a_0^2-a_0a_1+a_1^2$ and
$a_0+a_1\zeta\mid A_n$.
Put $m=(\varepsilon ec^2-n(a_0+a_1))/3\in\mathbb{Z}$ where
$\varepsilon\in\{\pm1\}$ is given by
\[
\varepsilon\equiv
\begin{cases}
n(a_0+a_1) & (\mathrm{mod}\ 3),\ \ \mathrm{if}\ 3\nmid n,
\\
a_0 & (\mathrm{mod}\ 3),\ \ \mathrm{if}\ n\equiv12 \pmod{27}.
\end{cases}
\]
Then
\begin{equation*}
\frac{1}{ec^2}(a_0\rho_n+a_1\rho_n'+m)
=\frac{1}{ec^2}(a_1\rho_n^2+(a_0-a_1n-a_1)\rho_n+m-2a_1)
\end{equation*}
is a generator of a normal integral basis of the simplest cubic field $L_n$.
\end{theo}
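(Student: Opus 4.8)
The plan is to establish two facts: first, that $\theta := \frac{1}{ec^2}(a_0\rho_n + a_1\rho_n' + m)$ lies in $\mathcal{O}_{L_n}$, and second, that $d(\theta,\theta',\theta'') = D_{L_n}$. Together these suffice: a triple of algebraic integers whose discriminant equals the field discriminant is an integral basis, and here it is automatically a \emph{normal} integral basis since $\theta',\theta''$ are the conjugates of $\theta$. Because $\mathcal{O}_{L_n}$ is stable under $\mathrm{Gal}(L_n/\mathbb{Q})$, once $\theta\in\mathcal{O}_{L_n}$ is shown, $\theta',\theta''\in\mathcal{O}_{L_n}$ follow for free, so the integrality work concentrates on the single element $\theta$.

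First I would translate the hypothesis $a_0+a_1\zeta\mid A_n$ into rational congruences. Writing $A_n=(n+3)+3\zeta$ and using the norm $ec=a_0^2-a_0a_1+a_1^2=(a_0+a_1\zeta)(a_0+a_1\zeta^2)$, the divisibility in $\mathbb{Z}[\zeta]$ is equivalent, read off in the basis $\{1,\zeta\}$, to the pair
\[
3a_0\equiv(n+3)a_1\pmod{ec},\qquad (n+3)(a_0-a_1)+3a_1\equiv0\pmod{ec}.
\]
I would also record that $3t\equiv n\pmod{e^2c^3}$ in both cases (from $3u\equiv1$ when $3\nmid n$, and $3t=n$ exactly when $n\equiv12\pmod{27}$), and that $3\mid ec$ precisely when $3\mid a_0+a_1$, since $a_0^2-a_0a_1+a_1^2\equiv(a_0+a_1)^2\pmod 3$.

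Next, to prove integrality I would re-expand $\theta$ in the integral basis $\{1,\phi,\psi\}$ of Proposition~\ref{prop:IB Ln}. Substituting $\rho_n=c\phi+t$ and $\rho_n^2=c^2e\psi-c(t-n)\phi+(-2t^2+2nt+n+3)$ into the second expression for $\theta$ in the statement, the coefficient of $\psi$ is the integer $a_1$, the coefficient of $\phi$ is $\frac{a_0-a_1(1+t)}{ec}$, and the constant term is $\frac{C}{ec^2}$ for an explicit $C$. Thus everything reduces to two congruences: $ec\mid a_0-a_1(1+t)$ and $ec^2\mid C$. The first follows by multiplying through by $3$ and combining $3a_0\equiv(n+3)a_1$ with $3t\equiv n$; when $3\nmid n$ the factor $3$ is invertible modulo $ec$, while when $n\equiv12\pmod{27}$ (so $3\Vert ec$) one splits by the Chinese Remainder Theorem, the mod-$3$ part being handled by $a_0\equiv-a_1\pmod3$. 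The heavier congruence $ec^2\mid C$ is where the definition of $m$ (and hence of $\varepsilon$) enters, fed by $\Delta_n\equiv0$ and $n^3\equiv27\pmod{e^2c^3}$ from the proof of Proposition~\ref{prop:IB Ln} together with both congruences above. I expect this constant-term congruence, with its case split on $3\nmid n$ versus $n\equiv12\pmod{27}$, to be the main obstacle.

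Finally I would compute the discriminant. Expressing $(\theta,\theta',\theta'')$ in the $\mathbb{Q}$-basis $\{1,\rho_n,\rho_n'\}$ gives a transition matrix $\frac{1}{ec^2}N$ with $N\in M_3(\mathbb{Z})$, and a short cofactor expansion (in which the recurring sub-determinant is $a_0^2-a_0a_1+a_1^2=ec$) yields $\det N=ec\,(3m+n(a_0+a_1))=\varepsilon\,e^2c^3$ by the definition of $m$. Since $d(1,\rho_n,\rho_n')=d(1,\rho_n,\rho_n^2)=\Delta_n^2$ (the two bases differ by a unipotent matrix via \eqref{eq:roots4}), this gives
\[
d(\theta,\theta',\theta'')=\Big(\tfrac{\det N}{(ec^2)^3}\Big)^2\Delta_n^2=\frac{\Delta_n^2}{e^2c^6}=d^2e^2=D_{L_n}.
\]
Hence $\{\theta,\theta',\theta''\}$ is an integral basis, and being a full set of conjugates it is a normal integral basis, as asserted. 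As a consistency check, Lemma~\ref{lem:3eta}(1) gives $\mathrm{Tr}(\theta)=\varepsilon=\pm1$, the trace forced on any NIB generator.
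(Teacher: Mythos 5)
Your strategy is sound and genuinely different from the paper's, but as written the proposal is incomplete: its decisive step is flagged as an ``expected obstacle'' rather than proved. First the comparison. The paper never verifies integrality of the candidate element directly: it invokes Hilbert--Speiser so that \emph{some} NIB generator $\alpha$ exists, expresses the integral basis $\{1,\phi,\psi\}$ of Proposition~\ref{prop:IB Ln} in terms of the normal basis element $\rho_n/l$ with $l=ec^2n$, derives the ideal identity $(g)=(g_1,g_2,g_3)$ in $\mathbb{Z}[G]$ where $\alpha=g.(\rho_n/l)$, pushes it through $\nu\colon\mathbb{Z}[G]\to\mathbb{Z}[\zeta]$, $\sigma\mapsto\zeta$, factors $\nu(I)=(n(a_0+a_1\zeta))$ in $\mathbb{Z}[\zeta]$, and lifts back: up to a unit of $\mathbb{Z}[G]$, $\alpha$ must have the stated shape for some $m\in\mathbb{Z}$, with $m$ and $\varepsilon$ then pinned down by the trace condition $\mathrm{Tr}(\beta)=\pm1$ and a mod-$3$ analysis. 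Your plan---prove $\theta\in\mathcal{O}_{L_n}$ by hand, then prove $d(\theta,\theta',\theta'')=D_{L_n}$---bypasses Hilbert--Speiser, $\mathbb{Z}[G]^\times$ and $\nu$ entirely, and even re-proves existence of a NIB for these fields; in exchange it is a pure verification (it does not explain where $(a_0,a_1,m)$ come from), and it does not yield Theorem~\ref{theo:main}, which in the paper falls out of the same group-ring machinery via Lemma~\ref{lem:all NIB}. Your discriminant half is complete and correct: $\det N=ec\bigl(3m+n(a_0+a_1)\bigr)=\varepsilon e^2c^3$, $d(1,\rho_n,\rho_n')=d(1,\rho_n,\rho_n^2)=\Delta_n^2$ by \eqref{eq:roots4}, hence $d(\theta,\theta',\theta'')=\Delta_n^2/(e^2c^6)=d^2e^2=D_{L_n}$, so the three conjugates, once known to be integral, have index one and form a normal integral basis.

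The gap is the congruence $ec^2\mid C$ for the constant term: until it is closed, nothing is proved. It does close, and more easily than you predict. From $\rho_n=c\phi+t$ and $\rho_n^2=c^2e\psi-c(t-n)\phi-2t^2+2nt+n+3$ one gets $C=-2a_1t^2+a_1nt+(a_0-a_1)t+a_1(n+1)+m$. If $3\nmid n$: multiply by $27$ (invertible mod $ec^2$ since $3\nmid\Delta_n$), use $3t\equiv n\pmod{ec^2}$ and $27m=9\varepsilon ec^2-9n(a_0+a_1)\equiv-9n(a_0+a_1)\pmod{ec^2}$; everything telescopes to $27C\equiv3a_1(n^2+3n+9)\equiv0\pmod{ec^2}$ because $ec^2\mid\Delta_n$. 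Note that $\varepsilon$ is invisible in this computation, and the divisibility hypothesis $a_0+a_1\zeta\mid A_n$ is not needed here either (it is needed only for your $\phi$-coefficient); in this case the sign $\varepsilon$ is forced solely by the requirement $m\in\mathbb{Z}$, since $ec^2\equiv1\pmod 3$ and $3\nmid n(a_0+a_1)$. If $n\equiv12\pmod{27}$: with $t=n/3$ one gets the exact identity $C=a_1(t^2+t+1)+\varepsilon ec^2/3$, and $9(t^2+t+1)=\Delta_n$; writing $c=3c_0$ this becomes $C=3ec_0^2(a_1dec_0+\varepsilon)$ against $ec^2=9ec_0^2$, so what is needed is exactly $\varepsilon\equiv-a_1dec_0\pmod3$. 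Here an ingredient absent from your list is required: by Lemma~\ref{lem:p01} every prime factor of $dec_0$ is $\equiv1\pmod3$, so $d\equiv e\equiv c_0\equiv1\pmod3$ and the condition reads $\varepsilon\equiv-a_1\pmod3$; since $3\mid ec$ forces $3\mid a_0+a_1$ (your own observation), this is $\varepsilon\equiv a_0\pmod3$, precisely the definition of $\varepsilon$ in the theorem. So your architecture is correct and fillable, but this computation---the one place where the definition of $\varepsilon$ and Lemma~\ref{lem:p01} earn their keep---must actually be carried out for the proposal to stand as a proof.
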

\begin{proof}
Since
\[
d(\rho_n,\rho_n',\rho_n'')
=(\rho_n+\rho_n'+\rho_n'')^2
(3(\rho_n\rho_n'+\rho_n'\rho_n''+\rho_n''\rho_n)
-(\rho_n+\rho_n'+\rho_n'')^2)^2=n^2\Delta_n^2\neq0,
\]
it follows that $\{\rho_n,\rho_n',\rho_n''\}$
is a normal  basis of $L_n$.
Let $\{1,\phi,\psi \}$ be the integral basis of
$L_n$ given by Proposition~\ref{prop:IB Ln}.
Using $(\ref{eq:roots4})$, we obtain
\begin{equation}\label{eq:1-phi-psi}
\begin{split}
1&=\frac{1}{n}(\rho_n+\rho_n'+\rho_n''),
\\
\phi &=\frac{1}{c}(\rho_n-t)
=\frac{1}{c}\left(\rho_n-\frac{t}{n}(\rho_n+\rho_n'+\rho_n'')\right)
=\frac{1}{cn}((n-t)\rho_n-t\rho_n'-t\rho_n''),
\\
\psi &=\frac{1}{ec^2}(\rho_n^2+(t-n)\rho_n+t^2-nt-n-3) 
\\
&=\frac{1}{ec^2}(\rho_n'+(t+1)\rho_n+t^2-nt-n-1) 
\\
& =\frac{1}{ec^2}\left(\rho_n'+(t+1)\rho_n
+\frac{t^2-nt-n-1}{n}(\rho_n+\rho_n'+\rho_n'')\right) 
\\
& =\frac{1}{ec^2n}((t^2-1)\rho_n+(t^2-nt-1)\rho_n'+(t^2-nt-n-1)\rho_n'').
\end{split}
\end{equation}
\\
Let $\ell :=ec^2n$.
Since $\{1,\phi,\psi\}$ is an integral basis,
we have from (\ref{eq:1-phi-psi})
\begin{equation}\label{eq:inc}
\ell \mathcal{O}_{L_n}\subset\mathbb{Z}[G].\rho_n.
\end{equation}
Let $\alpha$ be a generator of an $\mathrm{NIB}$ of $L_n$.
From (\ref{eq:inc}), there exists $g\in\mathbb{Z}[G]$ satisfying
$\alpha=g.(\rho_n/\ell )$. 
Therefore, we have
\begin{equation}\label{eq:=}
\mathcal{O}_{L_n}
=\mathbb{Z}[G].\alpha
=g\mathbb{Z}[G]. \frac{\rho_n}{\ell}. 
\end{equation}
Furthermore, the equalities of (\ref{eq:1-phi-psi}) can be written as
\begin{equation}\label{eq:1-phi-psi-2}
\begin{split}
1 &=g_1.\frac{\rho_n}{\ell},\ \ g_1:=\frac{\ell}{n}(1+\sigma+\sigma^2),
\\
\phi &=g_2.\frac{\rho_n}{\ell},\ \ 
g_2:=\frac{\ell}{cn}((n-t)-t\sigma-t\sigma^2), 
\\
\psi &=g_3.\frac{\rho_n}{\ell},\ \ 
g_3:=\frac{\ell}{ec^2n}((t^2-1)+(t^2-nt-1)\sigma+(t^2-nt-n-1)\sigma^2).
\end{split}
\end{equation}
\\
From (\ref{eq:=}), (\ref{eq:1-phi-psi-2}) and 
\[
\mathcal{O}_{L_n}=\mathbb{Z}+\phi\mathbb{Z}+\psi\mathbb{Z}
=(g_1\mathbb{Z}[G]+g_2\mathbb{Z}[G]+g_3\mathbb{Z}[G]).\frac{\rho_n}{l},
\]
we obtain the equality as ideals of $\mathbb{Z}[G]$ :
\[
(g)+\mathrm{Ann}_{\mathbb{Z}[G]}\left(\frac{\rho_n}{\ell}\right)
=(g_1,g_2,g_3)+\mathrm{Ann}_{\mathbb{Z}[G]}\left(\frac{\rho_n}{\ell}\right).
\]
Since $\{\rho_n/\ell,\rho_n'/\ell,\rho_n''/\ell\}$ is a normal basis of $L_n$,
we have $\mathrm{Ann}_{\mathbb{Z}[G]}\left(\rho_n/\ell\right)=0$,
and hence we get the equality as ideals of $\mathbb{Z}[G]$ :
\[
(g)=(g_1,g_2,g_3).
\]
Consider the surjective ring homomorphism
\[
\nu\ :\ \mathbb{Z}[G]\longrightarrow\mathbb{Z}[\zeta],
\]
defined by $\nu(\sigma)=\zeta$.
We calculate the image of the ideal $I:=(g)=(g_1,g_2,g_3)$ of
$\mathbb{Z}[G]$ by $\nu$.
Since $\nu$ is surjective, we obtain the ideal of $\mathbb{Z}[\zeta]$ :
\begin{equation}\label{eq:nu}
\nu(I)=(\nu(g))=(\nu(g_1),\nu(g_2),\nu(g_3)).
\end{equation}
From (\ref{eq:1-phi-psi-2}), the elements $\nu(g_1),\nu(g_2)$ and $\nu(g_3)$ are given by
\begin{eqnarray*}
\nu(g_1) 
\!\!\!&=&\!\!\! 
\frac{\ell}{n}(1+\zeta+\zeta^2)=0,
\\
\\
\nu(g_2) 
\!\!\!&=&\!\!\!
\frac{\ell}{cn}((n-t)-t\zeta-t\zeta^2)=\frac{\ell}{c}=ecn,
\\
\\
\nu(g_3) 
\!\!\!&=&\!\!\! 
\frac{\ell}{ec^2n}((t^2-1)+(t^2-nt-1)\zeta+(t^2-nt-n-1)\zeta^2)
\\
\\
\!\!\!&=&\!\!\! 
-nt\zeta-n(t+1)\zeta^2
=
n(t+1+\zeta).
\end{eqnarray*}
Therefore, we have
\[
\nu(I)=n(ec,t+1+\zeta).
\]
\quad
Suppose that $3\nmid n$.
Since $t=un$ and $3u\equiv1 \pmod{e^2c^3}$,
we have $\nu(I)=n(ec,uA_n)$ with $A_n=n+3(1+\zeta)$.
From Lemma\,\ref{lem:p01} and $3\nmid ec$,
it follows that any prime number $p$ dividing $ec$ satisfies
$p\equiv1 \pmod{3}$ and hence we have
\[
ec=(\pi_1\cdots\pi_k)(\pi_1'\cdots\pi_k'),
\]
where $\pi_1,\cdots,\pi_k$ are prime elements dividing $A_n$ and
$\pi_1',\cdots,\pi_k'$ are their conjugates respectively.
Furthermore, since $(ce,u)=1$, we conclude that
\begin{equation}\label{eq:nu-gamma1}
\begin{split}
\nu(I)
&=n\pi_1\cdots\pi_k(\pi_1'\cdots\pi_k',uA_n(\pi_1\cdots\pi_k)^{-1})
\\
&=n\pi_1\cdots\pi_k(1)
\\
&=(\gamma),
\end{split}
\end{equation}
where $\gamma:=n\pi_1\cdots\pi_k=n(a_0+a_1\zeta)$ and
$a_0,a_1\in\mathbb{Z}$ are integers satisfying $ec=a_0^2-a_0a_1+a_1^2$
and $a_0+a_1\zeta\mid A_n$.
\\
\quad
Suppose that $n\equiv12 \pmod{27}$.
Since $t=n/3$,
we have $\nu(I)=n/3(3ec,A_n)$ with $A_n=n+3(1+\zeta)$.
From Lemma\,\ref{lem:p01} and $3\mid\mid ec$,
it follows that any prime number $p$ dividing $ec$ satisfies $p=3$ or
$p\equiv1 \pmod{3}$ and hence we have
\[
ec=3(\pi_1\cdots\pi_k)(\pi_1'\cdots\pi_k'),
\]
where $\pi_1,\cdots,\pi_k$ are prime elements dividing $A_n$ and
$\pi_1',\cdots,\pi_k'$ are their conjugates respectively.
Furthermore, since $3^3\mid\mid\Delta_n=A_nA_n'$, we conclude that
\begin{equation}\label{eq:nu-gamma2}
\begin{split}
\nu(I)
&=n(1-\zeta)\pi_1\cdots\pi_k
((1-\zeta)\pi_1'\cdots\pi_k',A_n(3(1-\zeta)\pi_1\cdots\pi_k)^{-1})
\\
&=n(1-\zeta)\pi_1\cdots\pi_k(1)
\\
&=(\gamma),
\end{split}
\end{equation}
where $\gamma:=n(1-\zeta)\pi_1\cdots\pi_k=n(a_0+a_1\zeta)$ and
$a_0,a_1\in\mathbb{Z}$ are integers satisfying $ec=a_0^2-a_0a_1+a_1^2$
and $a_0+a_1\zeta\mid A_n$.
\\
\quad
From (\ref{eq:nu-gamma1}) and (\ref{eq:nu-gamma2}), we have the following.
\begin{equation}\label{eq:nu=gamma}
\nu(I)=(\gamma),\ \ \gamma:=n(a_0+a_1\zeta),
\end{equation}
where $a_0,a_1\in\mathbb{Z}$ are integers satisfying
$ec=a_0^2-a_0a_1+a_1^2$ and $a_0+a_1\zeta\mid A_n$.
\\
\quad
Put $x:=n(a_0+a_1\sigma)\in\mathbb{Z}[G]$.
From (\ref{eq:nu}) and (\ref{eq:nu=gamma}),
we have $\nu(g)=v\gamma$ with $v\in\mathbb{Z}[\zeta]^\times$.
Since $\mathbb{Z}[\zeta]^\times=\{\pm1,\pm\zeta,\pm\zeta^2\}$ and
$\mathbb{Z}[G]^\times=\{\pm1_G,\pm\sigma,\pm\sigma^2\}$,
there exists $\xi\in\mathbb{Z}[G]^\times$ satisfying
$\nu(\xi)=v^{-1}$, and we have $\nu(\xi g)=\gamma=\nu(x)$.
We conclude that $\xi g-x\in\mathrm{Ker}~\nu=(1_G+\sigma+\sigma^2)$.
Therefore, there exists $m\in\mathbb{Z}$ satisfying
\[
(\xi g-x).\frac{\rho_n}{\ell}=m\mathrm{Tr}\left(\frac{\rho_n}{\ell}\right),
\]
where $\mathrm{Tr}$ is the trace map from $L_n$ to $\mathbb{Q}$.
We have 
\begin{equation}\label{eq:xi}
\begin{split}
\xi.\alpha &=(\xi g).\frac{\rho_n}{\ell}
\\
&=x.\frac{\rho_n}{\ell}+m\mathrm{Tr}\left(\frac{\rho_n}{\ell}\right)
\\
&=\frac{1}{ec^2}(a_0\rho_n+a_1\rho_n'+m).
\end{split}
\end{equation}
\\
Since $\alpha$ is a generator of an NIB and
$\xi\in\mathbb{Z}[G]^\times$, it follows from Lemma\,\ref{lem:all NIB} that
$\xi.\alpha$ is also a generator of an NIB. 
Therefore, a generator of an NIB  of $L_n$ is given by
\begin{equation}\label{eq:NIB}
\frac{1}{ec^2}(a_0\rho_n+a_1\rho_n'+m)
\end{equation}
from (\ref{eq:xi}).
If $\beta$ is a generator of an NIB of $L_n$,
then we have
$(\beta+\beta'+\beta'')/\mathrm{Tr}(\beta)=1=y.\beta$ for
$y\in\mathbb{Z}[G]$.
Therefore, we have $1/\mathrm{Tr}(\beta)\in\mathbb{Z}$ and hence
$\mathrm{Tr}(\beta)=\pm1$.
The trace of (\ref{eq:NIB}) is
\begin{equation}\label{eq:tr}
\e
=
\frac{1}{ec^2}\mathrm{Tr}(a_0\rho_n+a_1\rho_n'+m)
=
\frac{1}{ec^2}(n(a_0+a_1)+3m),
\end{equation}
for $\e\in\{\pm1\}$, and hence we have
\begin{equation}\label{eq:m}
m
=
\frac{\e ec^2-n(a_0+a_1)}{3}.
\end{equation}
\quad
Next, we determine the sign $\e\in\{\pm1\}$.
Suppose that $3\nmid n$.
From Lemma\,\ref{lem:p01} and $\Delta_n=dec^3$,
we have $e\equiv c\equiv1 \pmod{3}$.
We conclude that $3\nmid a_0+a_1$ and  $\e\equiv n(a_0+a_1) \pmod{3}$ from (\ref{eq:tr}).
\\
\quad
Suppose that $n\equiv12 \pmod{27}$.
Since $3\mid\mid c$ and
$ec=a_0^2-a_0a_1+a_1^2=(a_0+a_1\zeta)(a_0+a_1\zeta^2)$,
we have 
\begin{equation}\label{eq:||}
1-\zeta\mid\mid a_0+a_1\zeta,\ \ 
1-\zeta\mid\mid a_0+a_1\zeta^2.
\end{equation}
If $3\mid a_0$,
then we have $a_1\equiv a_0+a_1\zeta\equiv0  \pmod{1-\zeta}$,
and hence $3\mid a_1$.
This is a contradiction to (\ref{eq:||}).
Therefore, we have $3\nmid a_0$, and similarly $3\nmid a_1$.
Put $\eta:=(a_0\rho_n+a_1\rho_n'+m)/ec^2$.
From Lemma\,\ref{lem:3eta}, we have
\[
\eta\eta'\eta''=\frac{1}{e^3c^6}R
\]
where
\begin{equation}\label{eq:R}
\begin{split}
R&:=(a_0+a_1)m^2n-a_0a_1^2(n^2+3n+9)+a_0a_1mn^2
\\
& \quad -(a_0^2-a_0a_1+a_1^2)m(n+3)+m^3+(a_0+a_1)^3.
\end{split}
\end{equation}
Since $\eta\in\mathcal{O}_{L_n}$, we have $\eta\eta'\eta''\in\mathbb{Z}$.
Furthermore, since $3\mid\mid c$ and $3\nmid e$,
we have $3^6\mid R$.
From (\ref{eq:||}), we have $a_0+a_1\equiv a_0+a_1\zeta\equiv0 \pmod{1-\zeta}$
and hence $3\mid a_0+a_1$.
Furthermore, from (\ref{eq:tr}), we have $\e ec^2=n(a_0+a_1)+3m$.
From this equality, we obtain $3\mid m$.
From (\ref{eq:R}), $n^2+3n+9=\Delta_n=de^2c^3$ and $a_0^2-a_0a_1+a_1^2=ec$,
we have
\[
-a_0a_1^2de^2c^3+a_0a_1mn^2-ecm(n+3)+m^3+(a_0+a_1)^3\equiv0 \pmod{3^4}.
\]
Put $C:=c/3,\ M:=m/3$ and $N:=n/3\in\mathbb{Z}$.
Dividing the above congruence by $3^3$, we obtain
\begin{equation}\label{eq:mod3-1}
-a_0a_1^2de^2C^3+a_0a_1MN^2-eCM(N+1)+M^3+\left(\frac{a_0+a_1}{3}\right)^3
\equiv0 \pmod{3}.
\end{equation}
From Lemma\,\ref{lem:p01} and $\Delta_n=de^2c^3$,
we have $d\equiv e\equiv C\equiv1 \pmod{3}$.
Furthermore, we have $a_1^2\equiv1 \pmod{3}$ since $3\nmid a_1$,
and $N\equiv1\ \mod{3}$ since $n\equiv12 \pmod{27}$.
From (\ref{eq:mod3-1}), we have
\begin{equation}\label{eq:mod3-2}
-a_0+a_0a_1M-2M+M^3+\left(\frac{a_0+a_1}{3}\right)^3\equiv0 \pmod{3}.
\end{equation}
Dividing the equality $\e ec^2=n(a_0+a_1)+3m$ by $3^2$,
we obtain $\e eC^2=(a_0+a_1)N/3+M$,
and hence $(a_0+a_1)/3\equiv\e-M \pmod{3}$.
From this congruence and (\ref{eq:mod3-2}), we have
\begin{equation}\label{eq:mod3-3}
-a_0+(a_0a_1+1)M+\e\equiv0 \pmod{3}.
\end{equation}
If $a_0\equiv a_1 \pmod{3}$ then we have
$a_0+a_1\zeta\equiv a_0(1+\zeta)\not\equiv0 \pmod{1-\zeta}$.
This is a contradiction to (\ref{eq:||}).
Therefore, we have 
$a_0\equiv -a_1 \pmod{3}$,
and hence $a_0a_1\equiv-1 \pmod{3}$.
From (\ref{eq:mod3-3}), we obtain $\e\equiv a_0 \pmod{3}$.
\\
\quad
Finally, the last equality of the theorem follows from (\ref{eq:roots4}).
The proof is complete.
\end{proof}
\begin{cor}\label{cor:NIB,iff}
Let $n\in\mathbb{Z}$ satisfying $3\nmid n$. 
The simplest cubic field $L_n$ has a normal integral basis of the form
\[
\{v+w\rho_n,v+w\rho'_n,v+w\rho''_n\}\ \ (v,w\in\mathbb{Z})
\]
if and only if $\Delta_n$ is square-free.
Furthermore in this case,
the integers $v$ and $w$ are given by
\[
w=\pm1,\ and\ 
v=\dfrac{1}{3}w\left(\left(\dfrac{n}{3}\right)-n\right),
\]
where $\left(\dfrac{n}{3}\right)$ is the Legendre symbol.
\end{cor}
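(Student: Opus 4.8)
The plan is to deduce the Corollary directly from Theorem \ref{theo}. The structural input is that every NIB generator has the shape given there: by Lemma \ref{lem:a0a1} there are exactly six valid pairs $(a_0,a_1)$ (those with $ec=a_0^2-a_0a_1+a_1^2$ and $a_0+a_1\zeta\mid A_n$), and by the count recalled in the introduction $L_n$ has exactly six NIB generators; since Theorem \ref{theo} assigns to each valid pair the generator $\frac{1}{ec^2}(a_0\rho_n+a_1\rho_n'+m)$ and distinct pairs give distinct generators (their $(\rho_n,\rho_n')$-coordinates differ), this assignment is a bijection. I would also record at the start that $\{1,\rho_n,\rho_n'\}$ is a $\mathbb{Q}$-basis of $L_n$, since by (\ref{eq:roots4}) the transition matrix from $\{1,\rho_n,\rho_n^2\}$ is unipotent.

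For the "only if" direction, I would start from a NIB generator of the form $v+w\rho_n$ with $v,w\in\mathbb{Z}$; here $w\neq0$, since $w=0$ makes the three conjugates coincide. Writing it as $\frac{1}{ec^2}(a_0\rho_n+a_1\rho_n'+m)$ for some valid pair and comparing coordinates in the basis $\{1,\rho_n,\rho_n'\}$ forces $a_1=0$, hence $ec=a_0^2$ and $w=a_0/(ec^2)$. The crux is then a size estimate: since $|a_0|=\sqrt{ec}$,
\[
|w|=\frac{|a_0|}{ec^2}=\frac{\sqrt{ec}}{ec^2}=\frac{1}{\sqrt{e}\,c^{3/2}},
\]
and because $e,c\geq1$ this is a nonzero integer only for $e=c=1$; then $\Delta_n=de^2c^3=d$ is square-free. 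This integrality bound is the one real obstacle.

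Conversely, if $\Delta_n$ is square-free then $e=c=1$, and the valid pairs with $a_1=0$ are $(a_0,a_1)=(\pm1,0)$, which are units of $\mathbb{Z}[\zeta]$ and so divide $A_n$; the corresponding generators are $w\rho_n+m$ with $w=\pm1$, of the required form. Finally I would compute $v$. As $ec^2=1$ and $a_0+a_1=w$, Theorem \ref{theo} gives $v=m=(\varepsilon-nw)/3$ with $\varepsilon\in\{\pm1\}$, $\varepsilon\equiv nw\pmod 3$. Since $3\nmid n$ implies $\left(\dfrac{n}{3}\right)\equiv n\pmod 3$, we get $w\left(\dfrac{n}{3}\right)\equiv nw\equiv\varepsilon\pmod 3$; as both sides lie in $\{\pm1\}$ and $1\not\equiv-1\pmod 3$, they are equal, so $\varepsilon=w\left(\dfrac{n}{3}\right)$. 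Substituting yields $v=\tfrac13 w\bigl(\left(\dfrac{n}{3}\right)-n\bigr)$, as required.
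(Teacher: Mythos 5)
Your proof is correct, and it follows the paper's overall strategy---use Theorem \ref{theo}, together with the six-element counts coming from Lemma \ref{lem:a0a1} and $\mathbb{Z}[G]^\times=\{\pm1_G,\pm\sigma,\pm\sigma^2\}$, to reduce the question to which valid pairs $\{a_0,a_1\}$ have $a_1=0$---but the crux step is settled by a genuinely different argument. The paper never invokes integrality of $w=a_0/(ec^2)$: with $a_1=0$, the divisibility condition $a_0+a_1\zeta\mid A_n$ from Lemma \ref{lem:a0a1} says the rational integer $a_0$ divides $A_n=(n+3)+3\zeta$, hence $a_0\mid 3$ (compare coordinates in the $\mathbb{Z}$-basis $\{1,\zeta\}$ of $\mathbb{Z}[\zeta]$), so $a_0\in\{\pm1,\pm3\}$; since $3\nmid n$ forces $3\nmid\Delta_n$ and thus $3\nmid ec=a_0^2$, only $a_0=\pm1$ survives, giving $ec=1$. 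You, by contrast, never use $a_0+a_1\zeta\mid A_n$ in the forward direction, relying instead on the size bound $|w|=|a_0|/(ec^2)=e^{-1/2}c^{-3/2}\le 1$, which a nonzero integer $w$ satisfies only when $e=c=1$. Both arguments are sound. Yours is more elementary and makes transparent why the denominator $ec^2$ must collapse; the paper's divisibility argument proves the slightly stronger fact that \emph{every} valid pair with $a_1=0$ automatically has $a_0=\pm1$, with no integrality hypothesis on $a_0/ec^2$, which is what allows the paper to phrase its intermediate step as a clean equivalence about the pairs alone. Your closing computation, $\varepsilon=w\left(\frac{n}{3}\right)$ via $\left(\frac{n}{3}\right)\equiv n \pmod{3}$ and hence $v=\frac{1}{3}w\left(\left(\frac{n}{3}\right)-n\right)$, agrees with (and supplies the details omitted from) the paper's final sentence, and your explicit remarks that $w\neq0$ and that $\{1,\rho_n,\rho_n'\}$ is a $\mathbb{Q}$-basis are checks the paper leaves implicit.
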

\begin{proof}
From Lemma\,\ref{lem:all NIB}, Theorem\,\ref{theo} and
$\mathbb{Z}[G]^\times=\{\pm1_G,\pm\sigma,\pm\sigma^2\}$,
all normal integral bases are given by the conjugates of
\[
\pm\frac{1}{ec^2}(a_1\rho_n^2+(a_0-a_1n-a_1)\rho_n+m-2a_1),
\]
where $a_0,a_1$ and $m$ are the integers defined in Theorem\,\ref{theo}.
Therefore, $L_n$ has a normal integral basis of the form
\[
\{v+w\rho_n,v+w\rho_n',v+w\rho_n''\}\ \ (v,w\in\mathbb{Z})
\]
if and only if $a_1=0$ and $a_0/ec^2 \in \mathbb Z$.
In this case, since $a_0+a_1\zeta=a_0$ divides
$A_n=(n+3)+3\zeta$, we have $a_0=\pm1,\pm3$.
However, since $ec=a_0^2-a_0a_1+a_1^2=a_0^2$ is not divisible by 3,
we conclude $a_0=\pm1$, and hence $ec=1$.
This means that $\Delta_n$ is square-free.
The generators of NIBs in this case are
$ \pm (a_0((\frac{n}{3})-n)/3+a_0\rho_n )$ with $a_0=\pm1$. 
\end{proof}
Next, we consider the case of $n\equiv12 \pmod{27}$.
In this case,
we have $3^3\mid\mid\Delta_n:=n^2+3n+9=bc^3$,
and hence $3\nmid b$ and $3\mid\mid c$.
\begin{cor}\label{cor:new case}
Let $n\in\mathbb{Z}$ satisfying $n\equiv12 \pmod{27}$.
Suppose that $\Delta_n/3^3$ is squarefree.
Then the set
\[
\left\{\frac{1}{9}(\rho_n-\rho_n'+3),
\frac{1}{9}(\rho_n'-\rho_n''+3),
\frac{1}{9}(\rho_n''-\rho_n+3)
\right\}
\]
is a normal integral basis of the simplest cubic field $L_n$.
\end{cor}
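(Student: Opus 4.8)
The plan is to obtain Corollary~\ref{cor:new case} as a direct specialization of Theorem~\ref{theo}, the only genuine work being to pin down the parameters $e,c,a_0,a_1,\varepsilon,m$ that the hypothesis forces. First I would unwind what ``$\Delta_n/3^3$ square-free'' means for the factorization $\Delta_n=de^2c^3$. By the paragraph preceding the corollary we already know $3\nmid b=de^2$ and $3\mid\mid c$, so $3\nmid d$, $3\nmid e$, and $c=3c_0$ with $3\nmid c_0$; substituting gives $\Delta_n/3^3=de^2c_0^3$. Since $d$ is square-free and coprime to $e$, square-freeness of this quantity forces $e=1$ and $c_0=1$, hence $e=1$, $c=3$, $ec=3$ and $ec^2=9$. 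This is the step that converts the global hypothesis into concrete numbers.

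Next I would select the pair $(a_0,a_1)=(1,-1)$ demanded by Theorem~\ref{theo}. It satisfies $a_0^2-a_0a_1+a_1^2=3=ec$, so it remains to verify the divisibility $a_0+a_1\zeta=1-\zeta\mid A_n$. Rather than computing directly, I would apply Lemma~\ref{lem:a0a1} with $s=ec=3$ (note $3\mid\Delta_n$): it produces exactly six pairs meeting \emph{both} conditions, whereas the norm equation $a_0^2-a_0a_1+a_1^2=3$ has exactly six integer solutions, namely the associates of $1-\zeta$, i.e.\ $\{1,-1\},\{-1,1\},\{1,2\},\{-1,-2\},\{2,1\},\{-2,-1\}$. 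These two six-element sets therefore coincide, so $(1,-1)$ is admissible. (Alternatively, since $(3)=(1-\zeta)^2$ and complex conjugation fixes the ramified prime, $v_{(1-\zeta)}(A_n)=\tfrac12 v_{(1-\zeta)}(\Delta_n)=3>0$, giving $1-\zeta\mid A_n$ at once.)

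Finally I would feed these values into Theorem~\ref{theo}. Because $n\equiv12\pmod{27}$, the sign is fixed by $\varepsilon\equiv a_0=1\pmod 3$, so $\varepsilon=1$; then, using $a_0+a_1=0$, we get $m=(\varepsilon ec^2-n(a_0+a_1))/3=(9-0)/3=3$. Theorem~\ref{theo} thus yields the generator $\tfrac1{ec^2}(a_0\rho_n+a_1\rho_n'+m)=\tfrac19(\rho_n-\rho_n'+3)$, and applying $\sigma$ and $\sigma^2$ (which cycle $\rho_n\mapsto\rho_n'\mapsto\rho_n''\mapsto\rho_n$) gives the conjugates $\tfrac19(\rho_n'-\rho_n''+3)$ and $\tfrac19(\rho_n''-\rho_n+3)$, which is exactly the stated set. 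The main obstacle is purely the bookkeeping in the first two steps---correctly reducing the square-free condition to $e=1,c=3$ and confirming that the convenient pair $(1,-1)$ is one of the admissible pairs---after which the corollary falls out of Theorem~\ref{theo} by substitution.
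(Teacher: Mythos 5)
Your proof is correct and takes essentially the same route as the paper: reduce the hypothesis that $\Delta_n/3^3$ is square-free to $e=1$, $c=3$, check that $(a_0,a_1)=(1,-1)$ meets the hypotheses of Theorem~\ref{theo}, and read off the generator with $\varepsilon=1$, $m=3$, whose conjugates form the stated set. The only difference is that you justify the divisibility $1-\zeta\mid A_n$ explicitly (by the counting argument via Lemma~\ref{lem:a0a1}, or by the valuation argument), a point the paper's proof asserts without comment.
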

\begin{proof}
Since $\Delta_n/3^3=de^2c^3/3^3$ is square-free,
we have 
$e=1$ and $c=3$.
For $a_0=1$ and $a_1=-1$, we have $ec=3=a_0^2-a_0a_1+a_1^2$ and
$a_0+a_1\zeta\mid A_n$.
It follows from Theorem\,\ref{theo} that $(\rho_n-\rho_n'+3)/9$
is a generator of an NIB.
\end{proof}
We obtain the following theorem from Lemma\,\ref{lem:all NIB},
Theorem\,\ref{theo} and
$\mathbb{Z}[G]^\times=\{\pm1,\pm\sigma,\pm\sigma^2\}$ for
$G=\mathrm{Gal}(L_n/\mathbb{Q})=\langle\sigma\rangle$.
\begin{theo}\label{theo:main}
Let $n$ be an integer with $3\nmid n$ or $n\equiv12\pmod{27}$,
and $a_0,a_1,m$ be integers defined in Theorem\,$\mathrm{\ref{theo}}$.
Then all normal integral bases of the simplest cubic field $L_n$ are
\[
\left\{
\frac{1}{ec^2}(a_0\rho_n+a_1\rho_n'+m),
\frac{1}{ec^2}(a_0\rho'_n+a_1\rho_n''+m),
\frac{1}{ec^2}(a_0\rho''_n+a_1\rho_n+m)
\right\}
\]
and
\[
\left\{
-\frac{1}{ec^2}(a_0\rho_n+a_1\rho_n'+m),
-\frac{1}{ec^2}(a_0\rho'_n+a_1\rho_n''+m),
-\frac{1}{ec^2}(a_0\rho''_n+a_1\rho_n+m)
\right\}.
\]
\end{theo}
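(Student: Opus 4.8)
The plan is to obtain Theorem \ref{theo:main} as a direct consequence of Theorem \ref{theo}, Lemma \ref{lem:all NIB}, and the description $\mathbb{Z}[G]^\times = \{\pm 1_G, \pm\sigma, \pm\sigma^2\}$. By Theorem \ref{theo}, the element
\[
\alpha := \frac{1}{ec^2}(a_0\rho_n + a_1\rho_n' + m)
\]
is a generator of a normal integral basis of $L_n$. Lemma \ref{lem:all NIB} then tells us that every generator of a $\mathrm{NIB}$ is of the form $u.\alpha$ with $u \in \mathbb{Z}[G]^\times$, obtained from a unique such $u$; hence the complete list of the six generators is $\pm\alpha,\ \pm\sigma.\alpha,\ \pm\sigma^2.\alpha$.

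First I would write these six generators explicitly. Since the action of $\sigma$ on $L_n$ cyclically permutes the roots as $\sigma(\rho_n) = \rho_n'$, $\sigma(\rho_n') = \rho_n''$, $\sigma(\rho_n'') = \rho_n$, applying $\sigma$ and $\sigma^2$ to $\alpha$ fixes $m$ and cycles the roots, giving
\[
\sigma.\alpha = \frac{1}{ec^2}(a_0\rho_n' + a_1\rho_n'' + m), \qquad \sigma^2.\alpha = \frac{1}{ec^2}(a_0\rho_n'' + a_1\rho_n + m),
\]
together with the three negatives.

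Next I would pass from generators to bases. A generator $\gamma$ determines the normal integral basis $\{\gamma, \sigma(\gamma), \sigma^2(\gamma)\}$ consisting of its conjugates. The set $\{\alpha, \sigma.\alpha, \sigma^2.\alpha\}$ is already stable under $G$, so each of the three generators $\alpha$, $\sigma.\alpha$, $\sigma^2.\alpha$ produces this same set, which is precisely the first basis in the statement; likewise the three generators $-\alpha$, $-\sigma.\alpha$, $-\sigma^2.\alpha$ all produce the second basis. Thus the six generators collapse to exactly the two bases listed, and the explicit forms above match the entries displayed in the theorem.

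Finally I would confirm that these two bases are genuinely distinct, which is the only point needing a little care. If they coincided, then $\alpha$ would equal one of $-\alpha$, $-\sigma.\alpha$, $-\sigma^2.\alpha$; the first is impossible since $\alpha \neq 0$, while $\alpha = -\sigma^j.\alpha$ for $j \in \{1,2\}$ would give $(1_G + \sigma^j).\alpha = 0$, contradicting $\mathrm{Ann}_{\mathbb{Z}[G]}(\alpha) = 0$ from Lemma \ref{lem:NB}. Hence $L_n$ has exactly the two normal integral bases stated, in agreement with the count of two bases recorded in the introduction. There is no serious obstacle here, since the substantive work is already carried out in Theorem \ref{theo}; what remains is the bookkeeping of the $\mathbb{Z}[G]^\times$-orbit of $\alpha$.
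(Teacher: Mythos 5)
Your proposal is correct and follows exactly the paper's route: the paper derives Theorem \ref{theo:main} from Theorem \ref{theo}, Lemma \ref{lem:all NIB}, and $\mathbb{Z}[G]^\times=\{\pm1_G,\pm\sigma,\pm\sigma^2\}$, with the six generators collapsing into two conjugate triples. Your write-up just makes explicit the bookkeeping (and the distinctness check via $\mathrm{Ann}_{\mathbb{Z}[G]}(\alpha)=0$) that the paper leaves to the reader.
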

We obtain the following corollary from 
Corollaries \ref{cor:NIB,iff} and \ref{cor:new case} 
\begin{cor}\label{cor:3all NIB}
Let $n\in\mathbb{Z}$.
We have the following.
\begin{itemize}
\item[(1)]
If $3\nmid n$ and $\Delta_n$ is square-free,
then all normal integral bases of $L_n$ are
\[
\{v_n+\rho_n,v_n+\rho_n',v_n+\rho_n''\}\ and\ 
\{-v_n-\rho_n,-v_n-\rho_n',-v_n-\rho_n''\},
\]
where $v_n:=((\frac{n}{3})-n)/3$.
\\
\item[(2)]
If $n\equiv12 \pmod{27}$ and $\Delta_n/3^3$ is square-free, 
then all normal integral bases of $L_n$ are
\[
\left\{
\frac{1}{9}(\rho_n-\rho_n'+3),
\frac{1}{9}(\rho_n'-\rho_n''+3),
\frac{1}{9}(\rho_n''-\rho_n+3)
\right\}
\]
and
\[
\left\{
-\frac{1}{9}(\rho_n-\rho_n'+3),
-\frac{1}{9}(\rho_n'-\rho_n''+3),
-\frac{1}{9}(\rho_n''-\rho_n+3)
\right\}.
\]
\end{itemize}
\end{cor}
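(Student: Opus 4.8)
The plan is to read off both statements directly from the classification of \emph{all} normal integral bases given in Theorem~\ref{theo:main}, feeding in the explicit shape of the generator supplied by Corollaries~\ref{cor:NIB,iff} and~\ref{cor:new case}. Theorem~\ref{theo:main} asserts that, once $L_n$ is tamely ramified, there are exactly two normal integral bases: the Galois orbit of the single generator $\alpha := (a_0\rho_n+a_1\rho_n'+m)/(ec^2)$ produced by Theorem~\ref{theo}, together with the orbit of $-\alpha$. So in each case it suffices to substitute the values of $a_0,a_1,e,c,m$ dictated by the relevant corollary and then list the three Galois conjugates of the resulting generator and of its negative, using that $\sigma$ acts by $\rho_n\mapsto\rho_n'\mapsto\rho_n''$.

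For part (1), I would invoke Corollary~\ref{cor:NIB,iff}: when $3\nmid n$ and $\Delta_n$ is square-free one has $e=c=1$, $a_1=0$ and $a_0=\pm1$, so the generator collapses to $\pm(v_n+\rho_n)$ with $v_n=(\left(\frac{n}{3}\right)-n)/3$. Choosing the representative $v_n+\rho_n$ and applying $\sigma,\sigma^2$ yields the orbit $\{v_n+\rho_n,\,v_n+\rho_n',\,v_n+\rho_n''\}$, while the orbit of $-v_n-\rho_n$ produces the second listed basis. Since Theorem~\ref{theo:main} guarantees these are the only two normal integral bases, the list is exhausted.

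For part (2), I would instead feed in the data of Corollary~\ref{cor:new case}, which records that for $n\equiv12\pmod{27}$ with $\Delta_n/3^3$ square-free one has $e=1$, $c=3$, and that $(\rho_n-\rho_n'+3)/9$ is a generator of a normal integral basis. Applying $\sigma$ and $\sigma^2$ to this generator gives $(\rho_n'-\rho_n''+3)/9$ and $(\rho_n''-\rho_n+3)/9$, which is exactly the first basis in the statement, and the orbit of its negative is the second. Again Theorem~\ref{theo:main} certifies that no further bases occur.

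The only point requiring care — and it is bookkeeping rather than a genuine obstacle — is to confirm that the $\pm$ ambiguity already present in Corollary~\ref{cor:NIB,iff} does not manufacture more than the two bases allotted by Theorem~\ref{theo:main}: the sign choices $\pm\bigl(a_0(v_n+\rho_n)\bigr)$ with $a_0=\pm1$ collapse to just $\pm(v_n+\rho_n)$, so precisely two generators, and hence two orbits, survive. Once this is noted the corollary follows with no computation beyond writing out the Galois conjugates.
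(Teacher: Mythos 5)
Your proposal is correct and follows essentially the same route as the paper: the paper likewise derives this corollary by combining the exhaustive classification of Theorem~\ref{theo:main} (two normal integral bases, the Galois orbits of $\pm\alpha$) with the explicit generators supplied by Corollary~\ref{cor:NIB,iff} in case (1) (where $\Delta_n$ square-free forces $e=c=1$, $a_1=0$, $a_0=\pm1$, giving $\pm(v_n+\rho_n)$) and by Corollary~\ref{cor:new case} in case (2) (where $e=1$, $c=3$, $a_0=1$, $a_1=-1$, $m=3$ give $(\rho_n-\rho_n'+3)/9$). Your sign-collapse remark is the same bookkeeping implicit in the paper's one-line derivation, so nothing further is needed.
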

We obtain the following theorem from Lemma~\ref{lem:3eta} and Theorem~\ref{theo:main}.
\begin{theo}\label{m.p.}
Let $n$ be an integer with $3\nmid n$ or $n\equiv12\pmod{27}$,
and $a_0,a_1,m$ and $\e\in\{\pm1\}$ be integers defined in
Theorem\,$\mathrm{\ref{theo}}$.
Put $\alpha=(a_0\rho_n+a_1\rho_n'+m)/ec^2$.
Then the minimal polynomial of the generator $\pm\alpha$
of an NIB of the simplest cubic field $L_n$ is
\begin{eqnarray*}
F_\pm(X)=X^3
\!\!\!\!&\mp&\!\!\!\!
\e X^2
+
\frac{1}{e^2c^4}(a_0a_1n^2+2(a_0+a_1)mn-ec(n+3)+3m^2)X
\\
\!\!\!\!&\mp&\!\!\!\!
\frac{1}{e^3c^6}(a_0a_1mn^2-a_0^2a_1n(n+3)+(a_0+a_1)m^2n-ecm(n+3)
\\
\!\!\!\!&+&\!\!\!\!
a_0a_1(a_0-a_1)(n^2+3n+6)+a_0^3+a_1^3+m^3-3a_0^2a_1),
\end{eqnarray*}
where the double-signs are in same order.
\end{theo}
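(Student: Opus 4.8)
The plan is to obtain the minimal polynomial of $\pm\alpha$ as the expanded product of its three conjugates, reading off the elementary symmetric functions from Lemma~\ref{lem:3eta}. First I note that by Theorem~\ref{theo:main} the element $\alpha=(a_0\rho_n+a_1\rho_n'+m)/ec^2$ is a generator of a $\mathrm{NIB}$ of $L_n$; in particular its conjugates $\alpha,\alpha',\alpha''$ form a $\mathbb Z$-basis of $\calO_{L_n}$ and are therefore $\mathbb Q$-linearly independent, hence pairwise distinct. Thus $\mathbb Q(\alpha)=L_n$ and the minimal polynomial of $\alpha$ over $\mathbb Q$ is the cubic $(X-\alpha)(X-\alpha')(X-\alpha'')=X^3-s_1X^2+s_2X-s_3$, where $s_1,s_2,s_3$ are the elementary symmetric functions of the conjugates. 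The same reasoning applies to $-\alpha$.

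The key observation is that $\alpha$ has exactly the shape $r_1\rho_n+r_2\rho_n'+r_3$ treated in Lemma~\ref{lem:3eta}, with $r_1=a_0/ec^2$, $r_2=a_1/ec^2$, $r_3=m/ec^2$, so that $s_1=\eta+\eta'+\eta''$, $s_2=\eta\eta'+\eta'\eta''+\eta''\eta$ and $s_3=\eta\eta'\eta''$ are given verbatim by parts $(1)$, $(2)$, $(3)$ of that lemma. Substituting these rational values and clearing the common denominators $ec^2$, $(ec^2)^2$ and $(ec^2)^3$ respectively yields $s_1$, $s_2$ and $s_3$ as $1$, $\frac{1}{e^2c^4}$ and $\frac{1}{e^3c^6}$ times integer polynomials in $a_0,a_1,m,n$. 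Two simplifications then put these into the stated form: the identity $a_0^2-a_0a_1+a_1^2=ec$ from the hypothesis of Theorem~\ref{theo} converts the terms $(a_0^2-a_0a_1+a_1^2)(n+3)$ and $(a_0^2-a_0a_1+a_1^2)m(n+3)$ into $ec(n+3)$ and $ecm(n+3)$; and the trace computation $(\ref{eq:tr})$ gives $s_1=\big(n(a_0+a_1)+3m\big)/ec^2=\e$, which is exactly the $X^2$ coefficient $-\e$ of $F_+$.

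It remains to pass from $+\alpha$ to $-\alpha$. The conjugates of $-\alpha$ are $-\alpha,-\alpha',-\alpha''$, so its elementary symmetric functions are $-s_1,\,s_2,\,-s_3$; equivalently the minimal polynomial of $-\alpha$ equals $-F_+(-X)=X^3+s_1X^2+s_2X+s_3$. Hence the two polynomials differ only in the signs of the $X^2$ and constant terms, which is precisely the meaning of the double sign in $F_\pm(X)$, the upper sign corresponding to $+\alpha$ and the lower to $-\alpha$.

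The computation is almost entirely mechanical once Lemma~\ref{lem:3eta} is in hand; the only place demanding care is the constant term $s_3$, where part $(3)$ of the lemma contributes the longest expression and one must track the single substitution $a_0^2-a_0a_1+a_1^2=ec$ correctly across all of its terms, in particular in $-(a_0^2-a_0a_1+a_1^2)r_3(n+3)$. I expect this bookkeeping---together with confirming that no further cancellation occurs, so that the displayed integer coefficients are already in reduced form---to be the main, if routine, obstacle.
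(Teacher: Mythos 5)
Your proposal is correct and follows essentially the same route as the paper, which derives this theorem in one line from Lemma~\ref{lem:3eta} and Theorem~\ref{theo:main}: you identify $\alpha$ as a NIB generator of the form $r_1\rho_n+r_2\rho_n'+r_3$ with $r_1=a_0/ec^2$, $r_2=a_1/ec^2$, $r_3=m/ec^2$, read off the elementary symmetric functions from Lemma~\ref{lem:3eta}, and simplify via $a_0^2-a_0a_1+a_1^2=ec$ and the trace identity $(\ref{eq:tr})$, with the sign for $-\alpha$ handled by $X\mapsto -X$. This is exactly the intended argument, only written out in full detail.
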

\begin{cor}\label{cor:f,g,h}
\begin{itemize}
\item[(1)]
Suppose that $n\equiv1 \pmod{3}$ and $\Delta_n$ is square-free.
Put $\alpha=(1-n)/3+\rho_n$.
The minimal polynomial of the generator $\pm\alpha$ of 
an NIB of $L_n$ is 
\[
f_\pm(X)
=
X^3\mp X^2-\frac{1}{3}(n^2+3n+8)X\mp\frac{1}{27}(2n^3+6n^2+18n+1),
\]
where the double-signs are in same order.
\\
\item[(2)] 
Suppose that $n\equiv2 \pmod{3}$ and $\Delta_n$ is square-free.
Put $\alpha=(1+n)/3-\rho_n$.
The minimal polynomial of the generator of $\pm\alpha$ of
an NIB of $L_n$ is
\[
g_\pm(X)
=
X^3\mp X^2-\frac{1}{3}(n^2+3n+8)X\pm\frac{1}{27}(2n^3+12n^2+36n+53),
\]
where the double-signs are in same order.
\\
\item[(3)] 
Suppose that $n\equiv12 \pmod{27}$ and
$\Delta_n/27$ is square-free.
Put $\alpha=(\rho_n-\rho_n'+3)/9$.
The minimal polynomial of the generator of $\pm\alpha$ of
an NIB  of $L_n$ is
\[
h_\pm(X)
=
X^3\mp X^2-\frac{1}{3^4}(n^2+3n-18)X\pm\frac{1}{3^6}(4n^2+12n+9),
\]
where the double-signs are in same order.
\end{itemize}
\end{cor}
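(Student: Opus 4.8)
The plan is to obtain each of the three minimal polynomials by specializing the general formula for $F_\pm(X)$ in Theorem~\ref{m.p.} to the parameter values dictated by the hypotheses. Thus the entire corollary reduces to three instances of substitution followed by polynomial simplification; no new structural input is needed beyond identifying the correct quadruple $(a_0,a_1,m,\varepsilon)$ in each regime and reading off $e,c$.

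First I would record the parameters. In cases (1) and (2) the hypothesis that $\Delta_n$ is square-free forces $e=c=1$ (so $ec^2=1$), and the pair $\{a_0,a_1\}$ solving $ec=a_0^2-a_0a_1+a_1^2=1$ with $a_0+a_1\zeta\mid A_n$ is a unit pair. For $n\equiv1\pmod 3$ I would take $(a_0,a_1)=(1,0)$, and for $n\equiv2\pmod 3$ the pair $(a_0,a_1)=(-1,0)$, matching the stated generators $\alpha=(1-n)/3+\rho_n$ and $\alpha=(1+n)/3-\rho_n$ respectively. In each case the sign rule $\varepsilon\equiv n(a_0+a_1)\pmod 3$ from Theorem~\ref{theo} gives $\varepsilon=1$ (using $n\equiv1$, resp. $n\equiv2$, mod $3$), and the definition $m=(\varepsilon ec^2-n(a_0+a_1))/3$ then yields $m=(1-n)/3$, resp. $m=(1+n)/3$. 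For case (3), the condition that $\Delta_n/27$ is square-free forces $e=1$, $c=3$ (so $ec^2=9$), and exactly as in Corollary~\ref{cor:new case} the pair $(a_0,a_1)=(1,-1)$ satisfies $ec=3=a_0^2-a_0a_1+a_1^2$ and $a_0+a_1\zeta\mid A_n$; here $\varepsilon\equiv a_0\pmod 3$ gives $\varepsilon=1$ and $m=(9-0)/3=3$, recovering the generator $(\rho_n-\rho_n'+3)/9$.

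With these values in hand, the remaining work is to insert them into the $X$-coefficient and the constant term of $F_\pm(X)$ and collect over a common denominator. The coefficient of $X^2$ is immediate from $\mp\varepsilon=\mp1$. The $X$-coefficient reduces cleanly; for instance in case (1) it becomes $\tfrac13(-n^2+1)-(n+3)=-\tfrac13(n^2+3n+8)$, and the analogous computations give the same $-\tfrac13(n^2+3n+8)$ in case (2) and $-\tfrac{1}{3^4}(n^2+3n-18)$ in case (3) (using $e^2c^4=3^4$).

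The main obstacle is purely computational: the constant term of $F_\pm(X)$ involves the full degree-three expression in $a_0,a_1,m,n$ from Theorem~\ref{m.p.}. I expect the bookkeeping here to be the only nontrivial effort, namely clearing the denominator ($27$ in cases (1),(2) and $3^6$ in case (3)) and checking that the numerator collapses to the stated cubics. One point deserving attention is the double-sign convention: although Theorem~\ref{m.p.} attaches the same symbol $\mp$ to both the $X^2$ term and the constant term, the evaluated constant numerator turns out to be negative in cases (2) and (3) (one computes $-2n^3-12n^2-36n-53$ and $-4n^2-12n-9$) and positive in case (1) (namely $2n^3+6n^2+18n+1$), so the displayed sign in front of the constant is $\pm$ in the former two cases and $\mp$ in the first, exactly as recorded in the statement. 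Once this sign accounting is made explicit, the three identities follow.
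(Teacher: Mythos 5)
Your proposal is correct and follows exactly the paper's own route: the paper's proof consists precisely of substituting $(e,c,a_0,a_1,\e,m)=(1,1,1,0,1,(1-n)/3)$, $(1,1,-1,0,1,(1+n)/3)$ and $(1,3,1,-1,1,3)$ into Theorem~\ref{m.p.}, which are the same parameter choices you identify (and your sign bookkeeping for the constant terms, giving $\mp$ in case (1) and $\pm$ in cases (2) and (3), checks out).
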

\begin{proof}
\begin{itemize}
\item[(1)]
It is obtained by putting $e=c=1$, $a_0=1,\ a_1=0,\ \e=1$, and
$m=(1-n)/3$ in Theorem\,\ref{m.p.}.
\\
\item[(2)]
It is obtained by putting $e=c=1$, $a_0=-1,\ a_1=0,\ \e=1$ and
$m=(1+n)/3$ in Theorem\,\ref{m.p.}.
\\
\item[(3)]
It is obtained by putting $e=1$, $c=3$, $a_0=1,\ a_1=-1,\ \e=1$, and
$m=3$ in Theorem\,\ref{m.p.}.
\end{itemize}
\end{proof}
\begin{ex}\label{ex:286}
Let $n=286=2\cdot11\cdot13$.
We have $\Delta_n=241\cdot7^3,\ d=241,\ e=1,\ c=7$,
and $\mathfrak{f}_{L_n}=241,\ D_{L_n}=241^2$.
In this case, all the pairs $\{a_0,a_1\}$ satisfying
$ec=a_0^2-a_0a_1+a_1^2$ and $a_0+a_1\zeta\mid A_n$
are $\{\pm2,\pm3\},\ \{\pm3,\pm1\},\ \{\mp1,\pm2\}$
(the double-signs are in same order).
Therefore, all the generators of NIBs of $L_n$ and their minimal
polynomials are given in the following table.
\begin{table}[H]
  \caption{$n=286$}
\begin{center}
\begingroup
\renewcommand{\arraystretch}{1.5}
\begin{tabular}{|c||c||c|}
\hline
\hspace{2mm}
$\{a_0,a_1\}$
\hspace{2mm}
&
\hspace{9mm}
generator of $\mathrm{NIB}$
\hspace{9mm}
&
\hspace{5mm}
minimal polynomial 
\hspace{5mm}
\\
\hline
\hline
$\{2,3\}$
&
$\frac{1}{49}(3\rho_n^2-859\rho_n-499)$
&
$X^3+X^2-80X+125$
\\  
\hline
$\{-3,-1\}$
&
$-\frac{1}{49}(\rho_n^2-284\rho_n-367)$
&
$X^3+X^2-80X+125$
\\  
\hline
$\{1,-2\}$
&
$-\frac{1}{49}(2\rho_n^2-575\rho_n-83)$
&
$X^3+X^2-80X+125$
\\  
\hline
$\{-2,-3\}$
&
$-\frac{1}{49}(3\rho_n^2-859\rho_n-499)$
&
$X^3-X^2-80X-125$
\\  
\hline
$\{3,1\}$
&
$\frac{1}{49}(\rho_n^2-284\rho_n-367)$
&
$X^3-X^2-80X-125$
\\  
\hline
$\{-1,2\}$
&
$\frac{1}{49}(2\rho_n^2-575\rho_n-83)$
&
$X^3-X^2-80X-125$
\\
\hline
\end{tabular}
\endgroup
\end{center}
\end{table}
\end{ex}
%
\begin{ex}\label{ex:66}
Let $n=66=2\cdot3\cdot11\equiv12 \pmod{27}$.
We have $\Delta_n=13^2\cdot3^3,\ d=1,\ e=13,\ c=3$,
and $\mathfrak{f}_{L_n}=13,\ D_{L_n}=13^2$.
In this case, all the pairs $\{a_0,a_1\}$ satisfying
$ec=a_0^2-a_0a_1+a_1^2$ and $a_0+a_1\zeta\mid A_n$
are $\{\pm5,\pm7\},\ \{\pm7,\pm2\},\ \{\mp2,\pm5\}$
(the double-signs are in same order).
Therefore, all the generators of NIBs of $L_n$ and their minimal
polynomials are given in the following table.
%
%
%
%
%
\begin{table}[H]
  \caption{$n=66$}
\begin{center}
\begingroup
\renewcommand{\arraystretch}{1.5}
\begin{tabular}{|c||c||c|}
\hline
\hspace{2mm}
$\{a_0,a_1\}$
\hspace{2mm}
&
\hspace{9mm}
generator of $\mathrm{NIB}$
\hspace{9mm}
&
\hspace{5mm}
minimal polynomial 
\hspace{5mm}
\\
\hline
\hline
$\{5,7\}$
&
$\frac{1}{117}(7\rho_n^2-464\rho_n-317)$
&
$X^3+X^2-4X+1$
\\  
\hline
$\{-7,-2\}$
&
$-\frac{1}{117}(2\rho_n^2-127\rho_n-163)$
&
$X^3+X^2-4X+1$
\\  
\hline
$\{2,-5\}$
&
$-\frac{1}{117}(5\rho_n^2-337\rho_n-37)$
&
$X^3+X^2-4X+1$
\\  
\hline
$\{-5,-7\}$
&
$-\frac{1}{117}(7\rho_n^2-464\rho_n-317)$
&
$X^3-X^2-4X-1$
\\  
\hline
$\{7,2\}$
&
$\frac{1}{117}(2\rho_n^2-127\rho_n-163)$
&
$X^3-X^2-4X-1$
\\  
\hline
$\{-2,5\}$
&
$\frac{1}{117}(5\rho_n^2-337\rho_n-37)$
&
$X^3-X^2-4X-1$
\\
\hline
\end{tabular}
\endgroup
\end{center}
\end{table}
\end{ex}
%
\section{Gaussian periods and roots of Shanks' cubic polynomial}
\label{sect:gaussian}
Let $n$ be an integer and $L_n$ the simplest cubic field defined in
\S\ref{sect:intro} and put ${\rm Gal}(L_n/\mathbb Q)=\langle\sigma \rangle$.
\begin{df}[Gaussian periods]\label{df:GP}
Set $K_n=\mathbb{Q}(\zeta_{\mathfrak{f}_{L_n}})$
where $\zeta_{\mathfrak{f}_{L_n}}$ is a primitive $\mathfrak{f}_{L_n}$th
root of unity. 
We define {\it Gaussian periods} $\eta_i\ (i=0,1,2)$ as 
the conjugates of
$\eta_0:=\mathrm{Tr}_{K_n/L_n}(\zeta_{\mathfrak{f}_{L_n}})$.
\end{df}
From now on, we assume that $L_n/\mathbb Q$ is tamely ramified.
If $\Delta_n=\mathfrak{f}_{L_n}$,
then it is known the relation between roots of $f_n(X)$ and 
gaussian periods $\eta_i$ by Lehmer \cite[p. 536]{Le},
Ch\^{a}telet \cite{Ch} and Lazarus \cite[Proposition\,2.2]{La}. 
Hence, we consider the case where $\mathfrak{f}_{L_n}$
is not necessarily equal to $\Delta_n$.
Since $\eta_0=\mathrm{Tr}_{K_n/L_n}(\zeta_{\mathfrak{f}_{L_n}})$ is a
generator of an NIB for $L_n$ (see \cite[Prop\,4.31]{N}),
it is equal to a conjugate of $\pm\a$, where
\[
\a
=
\frac{1}{ec^2}(a_0\rho_n+a_1\rho_n'+m)
=
\frac{1}{ec^2}(a_1\rho_n^2+(a_0-a_1n-a_1)\rho_n+m-2a_1)
\]
is the generator of an NIB in Theorems\,1, 2 and \ref{m.p.}.
From Theorem\,\ref{m.p.}, we have $\mathrm{Tr}_{L_n/\mathbb{Q}}(\a)=\e$,
where $\e\in\{\pm1\}$ is defined in Theorem\,\ref{theo}.
On the other hand, for any $i\in\{0,1,2\}$ we have
\[
\mathrm{Tr}_{L_n/\mathbb{Q}}(\eta_i)
=
\mathrm{Tr}_{L_n/\mathbb{Q}}
(\mathrm{Tr}_{K_n/L_n}(\zeta_{\mathfrak{f}_{L_n}}))
=
\mathrm{Tr}_{K_n/\mathbb{Q}}(\zeta_{\mathfrak{f}_{L_n}})
=
\mu(\mathfrak{f}_{L_n}),
\]
where $\mu$ is the M\"{o}bius function.
We obtain the following theorem.
\begin{theo}\label{theo:MP-gauss}
Let $n$ be an integer with $3\nmid n$ or $n\equiv12\pmod{27}$,
and $a_0,a_1,m$ and $\e\in\{\pm1\}$ be integers defined in Theorem\,$1$.
Set $\mathfrak{f}_{L_n}=p_1\cdots p_t\ 
(p_1,\cdots,p_t\ \text{are\ distinct\ prime\ numbers})$,
$\rho_n^{(i)}:=\sigma^i (\rho_n) \ (i\in\mathbb{Z}/3\mathbb{Z})$
are the conjugates of $\rho_n$,
then we have the following under proper numbering.
\[
\eta_i
=
\frac{(-1)^t\e}{ec^2}(a_0\rho_n^{(i)}+a_1\rho_n^{(i+1)}+m)
=
\frac{(-1)^t\e}{ec^2}(a_1 {\rho_n^{(i)}}^2+(a_0-a_1n-a_1)\rho_n^{(i)}+m-2a_1),
\]
and the minimal polynomial of $\eta_i\ (i=0,1,2)$ is
\[
\begin{cases}
F_+(X), & \mathrm{if}\ \e=(-1)^t, 
\\
F_-(X), & \mathrm{if}\ \e=(-1)^{t+1},
\end{cases}
\]
where $F_\pm(X)$ is the polynomial given in Theorem\,$\ref{m.p.}$.
\end{theo}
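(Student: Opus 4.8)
The plan is to build on the discussion immediately preceding the theorem, where it is already recorded that $\eta_0=\mathrm{Tr}_{K_n/L_n}(\zeta_{\mathfrak{f}_{L_n}})$ is a generator of $\mathrm{NIB}$ of $L_n$ (by \cite[Proposition\,4.31]{N}, using that $\mathfrak{f}_{L_n}$ is square-free since $L_n/\mathbb{Q}$ is tamely ramified) and that its trace is $\mathrm{Tr}_{L_n/\mathbb{Q}}(\eta_i)=\mu(\mathfrak{f}_{L_n})=(-1)^t$. By Theorem\,\ref{theo:main}, together with $\mathbb{Z}[G]^\times=\{\pm1_G,\pm\sigma,\pm\sigma^2\}$, the complete list of generators of $\mathrm{NIB}$ of $L_n$ consists of the six elements $\pm\sigma^j(\alpha)$, $j\in\{0,1,2\}$, where $\alpha=(a_0\rho_n+a_1\rho_n'+m)/ec^2$. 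Hence $\eta_0=\pm\sigma^j(\alpha)$ for some index $j$ and some sign that remains to be determined.

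First I would determine the sign by comparing traces. Since $\mathrm{Tr}_{L_n/\mathbb{Q}}(\alpha)=\e$ by Theorem\,\ref{m.p.}, applying $\sigma^j$ and the sign gives $\mathrm{Tr}_{L_n/\mathbb{Q}}(\pm\sigma^j(\alpha))=\pm\e$. Equating this with $(-1)^t$ and using $\e^2=1$ forces the sign to be exactly $(-1)^t\e$, because $(-1)^t\e\cdot\e=(-1)^t$. Thus $\eta_0=(-1)^t\e\,\sigma^j(\alpha)$ for some $j$. Since $\eta_i=\sigma^i(\eta_0)=(-1)^t\e\,\sigma^{i+j}(\alpha)$ and both $\{\eta_0,\eta_1,\eta_2\}$ and $\{\sigma^i(\alpha)\}_i$ run over the full set of Galois conjugates, I may cyclically relabel the indices (the ``proper numbering'') to absorb the shift $j$ and obtain $\eta_i=(-1)^t\e\,(a_0\rho_n^{(i)}+a_1\rho_n^{(i+1)}+m)/ec^2$ for all $i$; the second expression for $\eta_i$ then follows from (\ref{eq:roots4}).

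The minimal polynomial is then read off from Theorem\,\ref{m.p.}: the scalar $(-1)^t\e\in\{\pm1\}$ records whether $\eta_i$ is a conjugate of $+\alpha$ or of $-\alpha$. If $\e=(-1)^t$ then $(-1)^t\e=+1$, so $\eta_i$ is a conjugate of $\alpha$ and its minimal polynomial is $F_+(X)$; if $\e=(-1)^{t+1}$ then $(-1)^t\e=-1$, so $\eta_i$ is a conjugate of $-\alpha$ and its minimal polynomial is $F_-(X)$. This reproduces the two cases in the statement.

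Since all the substantive input---the identification of the generators of $\mathrm{NIB}$ in Theorem\,\ref{theo:main}, the trace of $\alpha$ from Theorem\,\ref{m.p.}, and the trace of $\eta_i$ computed just before the theorem---is already available, the argument is essentially sign and index bookkeeping, and I do not expect a genuine obstacle. The only step needing care is the simultaneous matching of the sign and the cyclic indexing: one must check that fixing the sign as $(-1)^t\e$ is consistent across all three conjugates and that the relabeling legitimately removes the unknown shift $j$, which it does precisely because the two triples are each a single Galois orbit.
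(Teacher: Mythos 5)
Your proposal is correct and follows essentially the same route as the paper: the paper proves this theorem precisely through the discussion preceding it, namely that $\eta_0$ is a generator of $\mathrm{NIB}$ (hence a conjugate of $\pm\alpha$ by Theorem\,\ref{theo:main}), that $\mathrm{Tr}_{L_n/\mathbb{Q}}(\alpha)=\e$, and that $\mathrm{Tr}_{L_n/\mathbb{Q}}(\eta_i)=\mu(\mathfrak{f}_{L_n})=(-1)^t$, from which the sign $(-1)^t\e$ and the choice between $F_+$ and $F_-$ are forced. Your explicit handling of the sign determination via $\e^2=1$ and the cyclic relabeling absorbing the shift $j$ just spells out the bookkeeping the paper leaves implicit.
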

For any integer $v$,
we have $\mathrm{Tr}_{L_n/\mathbb{Q}}(v+\rho_n)=3v+n$.
We obtain the following corollaries from Corollaries\,\ref{cor:3all NIB} and \ref{cor:f,g,h}.
We can see that Corollary\,\ref{cor:main} (1)  is equal to (\ref{eq:Lehmer-gauss}) in \S\,\ref{sect:intro},
 and Corollary\,\ref{cor:1} (1) and (2) are  equal to (\ref{eq:Lehmer-poly}) in \S\,\ref{sect:intro}.
\begin{cor}\label{cor:main}
Let $n\in\mathbb{Z},\ \mathfrak{f}_{L_n}=p_1\cdots p_t\ 
(p_1,\cdots,p_t\ \text{are\ distinct\ prime\ numbers})$, 
and $\rho_n^{(i)}$ $:=\sigma^i (\rho_n) \ (i\in\mathbb{Z}/3\mathbb{Z})$
be the conjugates of $\rho_n$.
\\
\begin{itemize}
\item[(1)]
If $3\nmid n$ and $\Delta_n$ is square-free, then we have
\[
\eta_i
=
(-1)^t\left(\frac{n}{3}\right)(v_n+\rho_n^{(i)}),
\]
where $v_n:=\left(\left(\dfrac{n}{3}\right)-n\right)/3$,
under proper numbering.
\\
\item[(2)]
If $n\equiv12 \pmod{27}$ and $\Delta_n/27$ is square-free, then we have
\[
\eta_i
=
\frac{(-1)^t}{9}(\rho_n^{(i)}-\rho_n^{(i+1)}+3)
=
\frac{(-1)^{t+1}}{9}({\rho_n^{(i)} }^2-(n+2)\rho_n^{(i)}-5),
\]
under proper numbering.
\end{itemize}
\end{cor}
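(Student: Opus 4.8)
The plan is to derive both statements directly from Theorem~\ref{theo:MP-gauss}, specialising the parameters $(e,c,a_0,a_1,m,\e)$ to the two situations via the explicit normal integral bases recorded in Corollaries~\ref{cor:3all NIB} and~\ref{cor:f,g,h}. Recall that Theorem~\ref{theo:MP-gauss} already expresses each Gaussian period as $\eta_i=\frac{(-1)^t\e}{ec^2}(a_0\rho_n^{(i)}+a_1\rho_n^{(i+1)}+m)$ under a suitable numbering, so the remaining work is purely the bookkeeping of substituting the correct values and reconciling the various signs.

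For part~(1), the hypothesis that $\Delta_n$ is square-free forces $e=c=1$ in the factorisation $\Delta_n=de^2c^3$, so $ec^2=1$. By Corollary~\ref{cor:f,g,h}(1),(2) the relevant data are $a_1=0$, together with $a_0=1,\ m=(1-n)/3$ when $n\equiv1\pmod 3$ and $a_0=-1,\ m=(1+n)/3$ when $n\equiv2\pmod 3$. I would first observe that in both cases $a_0=\left(\frac{n}{3}\right)$ and $m=\left(\frac{n}{3}\right)v_n$ with $v_n=\left(\left(\frac{n}{3}\right)-n\right)/3$, and that $\e=1$; the latter follows from the congruence $\e\equiv n(a_0+a_1)\pmod 3$ of Theorem~\ref{theo} (with $a_1=0$), which gives $\e\equiv1$ in each residue class. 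Feeding these into Theorem~\ref{theo:MP-gauss} yields $\eta_i=(-1)^t\left(\frac{n}{3}\right)\bigl(v_n+\rho_n^{(i)}\bigr)$, which is exactly the claim.

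For part~(2), the hypothesis that $\Delta_n/27$ is square-free forces $e=1$ and $c=3$, so $ec^2=9$; Corollary~\ref{cor:f,g,h}(3) supplies $a_0=1,\ a_1=-1,\ m=3,\ \e=1$. Substituting into the linear form of Theorem~\ref{theo:MP-gauss} immediately gives $\eta_i=\frac{(-1)^t}{9}\bigl(\rho_n^{(i)}-\rho_n^{(i+1)}+3\bigr)$. For the second expression I would substitute the same data into the quadratic form of that theorem, or equivalently apply $\sigma^i$ to the relation $\rho_n'=\rho_n^2-(n+1)\rho_n-2$ of~(\ref{eq:roots4}) to rewrite $\rho_n^{(i+1)}$, obtaining $\eta_i=\frac{(-1)^{t+1}}{9}\bigl({\rho_n^{(i)}}^2-(n+2)\rho_n^{(i)}-5\bigr)$.

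Since Theorem~\ref{theo:MP-gauss} does all the structural work, there is no genuine obstacle here: the statement is a clean specialisation. The only point demanding care is the sign bookkeeping in part~(1)---verifying that the single Legendre-symbol prefactor $\left(\frac{n}{3}\right)$ correctly absorbs the difference between the $n\equiv1$ and $n\equiv2$ cases, which rests on the identities $a_0=\left(\frac{n}{3}\right)$ and $m=\left(\frac{n}{3}\right)v_n$ and on $\e=1$ throughout. The phrase ``under proper numbering'' is inherited verbatim from Theorem~\ref{theo:MP-gauss}, so no additional indexing argument is needed.
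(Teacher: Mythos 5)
Your proposal is correct and follows essentially the same route as the paper: the paper also obtains this corollary by specialising Theorem~\ref{theo:MP-gauss} with the data $e=c=1$, $a_1=0$, $a_0=\pm1$ (resp.\ $e=1$, $c=3$, $a_0=1$, $a_1=-1$, $m=3$) coming from Corollaries~\ref{cor:3all NIB} and~\ref{cor:f,g,h}, with the sign fixed by $\e=1$ and the trace value $\mu(\mathfrak{f}_{L_n})=(-1)^t$. Your sign bookkeeping, the identities $a_0=\left(\frac{n}{3}\right)$, $m=\left(\frac{n}{3}\right)v_n$, and the use of (\ref{eq:roots4}) for the quadratic form in part (2) all check out.
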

\begin{cor}\label{cor:1}
Let $n\in\mathbb{Z},\ \mathfrak{f}_{L_n}=p_1\cdots p_t\ 
(p_1,\cdots,p_t\ \text{are\ distinct\ prime\ numbers})$,
and $f_\pm(X)$, $g_\pm(X)$, $h_\pm(X)$
are the polynomials given in Corollary\,$\ref{cor:f,g,h}$.
\\
\begin{itemize}
\item[(1)]
If $n\equiv1 \pmod{3}$ and $\Delta_n$ is square-free,
then the minimal polynomial of
$\mathrm{Tr}_{K_n/L_n}(\zeta_{\mathfrak{f}_{L_n}})$ is
\[
\begin{cases}
f_+(X), & \mathrm{if}\ t\ is\ even,
\\
f_-(X), & \mathrm{if}\ t\ is\ odd.
\end{cases}
\]
\\
\item[(2)]
If $n\equiv2 \pmod{3}$ and $\Delta_n$ is square-free,
then the minimal polynomial of
$\mathrm{Tr}_{K_n/L_n}(\zeta_{\mathfrak{f}_{L_n}})$ is
\[
\begin{cases}
g_+(X), & \mathrm{if}\ t\ is\ even,
\\
g_-(X), & \mathrm{if}\ t\ is\ odd,
\end{cases}
\]
\\
\item[(3)]
If $n\equiv12 \pmod{27}$ and $\Delta_n/27$ is square-free,
then the minimal polynomial of
$\mathrm{Tr}_{K_n/L_n}(\zeta_{\mathfrak{f}_{L_n}})$ is
\[
\begin{cases}
h_+(X), & \mathrm{if}\ t\ is\ even.
\\
h_-(X), & \mathrm{if}\ t\ is\ odd.
\end{cases}
\]
\end{itemize}
\end{cor}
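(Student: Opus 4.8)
The plan is to obtain Corollary~\ref{cor:1} as a direct specialization of Theorem~\ref{theo:MP-gauss}, feeding in the explicit parameter values already recorded in Corollary~\ref{cor:f,g,h}. The whole argument reduces to matching two signs: the intrinsic sign $\e\in\{\pm1\}$ attached to $n$ in Theorem~\ref{theo}, and the parity sign $(-1)^t$ coming from the number $t$ of prime divisors of the conductor.

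First I would recall that $\mathfrak{f}_{L_n}=p_1\cdots p_t$ is square-free, so $\mu(\mathfrak{f}_{L_n})=(-1)^t$, and hence $\mathrm{Tr}_{L_n/\Q}(\eta_0)=(-1)^t$ by the trace computation carried out just before Theorem~\ref{theo:MP-gauss}. That theorem then asserts that the minimal polynomial of $\eta_0=\mathrm{Tr}_{K_n/L_n}(\zeta_{\mathfrak{f}_{L_n}})$ equals $F_+(X)$ when $\e=(-1)^t$ and $F_-(X)$ when $\e=(-1)^{t+1}$, where $F_\pm(X)$ are the polynomials of Theorem~\ref{m.p.} attached to the generators $\pm\alpha$. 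Since the three conjugate periods $\eta_0,\eta_1,\eta_2$ share a single minimal polynomial, it suffices to treat $\eta_0$.

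The key step is to verify that $\e=1$ in each of the three cases, for then $\e=(-1)^t$ holds exactly when $t$ is even and $\e=(-1)^{t+1}$ exactly when $t$ is odd. I would substitute the parameters of Corollary~\ref{cor:f,g,h} into the defining congruence for $\e$ in Theorem~\ref{theo}: in case (1), $3\nmid n$ with $a_0=1,\ a_1=0$ gives $\e\equiv n(a_0+a_1)\equiv n\equiv1\pmod 3$; in case (2), $3\nmid n$ with $a_0=-1,\ a_1=0$ gives $\e\equiv -n\equiv1\pmod3$; and in case (3), $n\equiv12\pmod{27}$ with $a_0=1$ gives $\e\equiv a_0\equiv1\pmod3$. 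In every case $\e\in\{\pm1\}$ forces $\e=1$.

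Finally, Corollary~\ref{cor:f,g,h} identifies the polynomials $F_\pm(X)$ produced by these parameter choices with $f_\pm(X)$, $g_\pm(X)$ and $h_\pm(X)$, respectively. Combining this identification with the parity dichotomy above yields the three stated conclusions. There is no genuine analytic or arithmetic obstacle here; the only point demanding care---and the one resolved by the computation $\e=1$---is keeping the two signs consistent, since a misaligned sign would swap the plus and minus polynomials.
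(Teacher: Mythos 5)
Your proposal is correct and matches the paper's intended derivation: the paper presents Corollary~\ref{cor:1} as an immediate specialization of Theorem~\ref{theo:MP-gauss} using the parameter values $(e,c,a_0,a_1,m,\e)$ recorded in Corollary~\ref{cor:f,g,h}, which is exactly what you do. Your explicit check that $\e=1$ in all three cases (so that $F_{+}$ occurs precisely when $t$ is even and $F_{-}$ when $t$ is odd, with $F_{\pm}$ identified as $f_{\pm}$, $g_{\pm}$, $h_{\pm}$) is just the sign bookkeeping the paper leaves implicit, already encoded in the proof of Corollary~\ref{cor:f,g,h}.
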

\
\\
\begin{ex}\label{ex:table}
(1)\ 
The following table shows the Gaussian periods and their minimal
polynomials in the cubic fields $L_n$ for $n$ satisfying
$1\leq n\leq500,\ 3\nmid n$ and $\Delta_n\neq\mathfrak{f}_{L_n}$.
\begin{table}[H]
  \caption{ $1\leq n\leq500,\ 3\nmid n,\ \Delta_n\neq\mathfrak{f}_{L_n}$    }
 \begin{center}
 \begingroup
\renewcommand{\arraystretch}{1.5}
 \begin{tabular}{|c||c|c|c|c|}
 \hline
$n$ & $\Delta_n$  & $\mathfrak f_{L_n}$ & Gaussian period & minimal polynomial  \\  \hline  \hline 
$235$ & $13^2\cdot 331$ & $13\cdot 331$ & $\frac{1}{13}(\rho_n^2-239\rho_n +159)$ & $X^3-X^2-1434X+15937$ \\ \hline
$250$ & $7^2\cdot 1291$ & $7\cdot 1291$ & $-\frac{1}{7}(\rho_n^2-253\rho_n +79)$ & $X^3-X^2-3012X-32801$ \\ \hline
$269$ & $13^2\cdot 433$ & $13\cdot 433$ & $\frac{1}{13}(\rho_n^2-266\rho_n -446)$ & $X^3-X^2-1876X-22933$ \\ \hline
$271$ & $7\cdot 103^2$ & $7\cdot 103$ & $\frac{1}{103}(9\rho_n^2-2450\rho_n -616)$ & $X^3-X^2-240X-1175$ \\ \hline
$286$ & $7^3\cdot 241$ & $241$ & $-\frac{1}{49}(\rho_n^2-284\rho_n -367)$ & $X^3+X^2-80X+125$ \\ \hline
$299$ & $7^2\cdot 19 \cdot 97$ & $7\cdot 19 \cdot 97$ & $-\frac{1}{7}(\rho_n^2-302\rho_n +100)$ & $X^3+X^2-4300X-59249$ \\ \hline
$335$ & $7^2\cdot 2311$ & $7\cdot 2311$ & $-\frac{1}{7}(\rho_n^2-333\rho_n -451)$ & $X^3-X^2-5392X+85079$ \\ \hline
$356$ & $7\cdot 19 \cdot 31^2$ & $7\cdot 19\cdot 31$ & $-\frac{1}{31}(6\rho_n^2-2137\rho_n -1307)$ & $X^3+X^2-1374X-18019$ \\ \hline
$374$ & $37^2\cdot 103$ & $37\cdot 103$ & $-\frac{1}{37}(3\rho_n^2-1118\rho_n -1265)$ & $X^3-X^2-1270X+4799$ \\ \hline
$397$ & $7^3\cdot 463$ & $463$ & $\frac{1}{49}(\rho_n^2-400\rho_n +114)$ & $X^3+X^2-154X+343$ \\ \hline
$404$ & $7\cdot 13^2 \cdot 139$ & $7\cdot 13\cdot 139$ & $\frac{1}{13}(\rho_n^2-408\rho_n +263)$ & $X^3+X^2-4216X+76831$ \\ \hline
$433$ & $7^2\cdot 3853$ & $7\cdot 3853$ & $\frac{1}{7}(\rho_n^2-431\rho_n -577)$ & $X^3-X^2-8990X-175811$ \\ \hline
$446$ & $7^2\cdot 61 \cdot 67$ & $7\cdot 61 \cdot 67$ & $-\frac{1}{7}(\rho_n^2-449\rho_n +149)$ & $X^3+X^2-9536X-194965$ \\ \hline
$482$ & $7^2\cdot 13 \cdot 367$ & $7\cdot 13\cdot 367$ & $\frac{1}{7}(\rho_n^2-480\rho_n -647)$ & $X^3+X^2-11132X-249859$ \\ \hline
\end{tabular}
\endgroup
\end{center}
\end{table}
\bigskip
(2)\ 
The following table shows the Gaussian periods and their minimal
polynomials in the cubic fields $L_n$ for $n$ satisfying
$1\leq n\leq500$ and $n\equiv12\pmod{27}$ (in this case, always
$\Delta_n\neq\mathfrak{f}_{L_n}$).
\\
\begin{table}[H]
  \caption{ $1\leq n\leq500,\ n\equiv12\pmod{27}$   }
 \begin{center}
 \begingroup
\renewcommand{\arraystretch}{1.5}
 \begin{tabular}{|c||c|c|c|c|}
 \hline
 $n$ & $\Delta_n$  & $\mathfrak f_{L_n}$ & Gaussian period & minimal polynomial  \\  \hline  \hline 
 $12$ & $3^3\cdot 7$ & $7$ & $\frac{1}{9}(\rho_n^2-14\rho_n -5)$ & $X^3+X^2-2X-1$ \\ \hline
 $39$ & $3^3\cdot 61$ & $61$ & $\frac{1}{9}(\rho_n^2-41\rho_n -5)$ & $X^3+X^2-20X-9$ \\ \hline
 $66$ & $3^3\cdot 13^2$ & $13$ & $\frac{1}{117}(7\rho_n^2-464\rho_n -317)$ & $X^3+X^2-4X+1$ \\ \hline
 $93$ & $3^3\cdot 331$ & $331$ & $\frac{1}{9}(\rho_n^2-95\rho_n -5)$ & $X^3+X^2-110X-49$ \\ \hline
 $120$ & $3^3\cdot 547$ & $547$ & $\frac{1}{9}(\rho_n^2-122\rho_n -5)$ & $X^3+X^2-182X-81$ \\ \hline
 $147$ & $3^3\cdot 19 \cdot 43$ & $19\cdot 43$ & $-\frac{1}{9}(\rho_n^2-149\rho_n -5)$ & $X^3-X^2-272X+121$ \\ \hline
 $174$ & $3^3\cdot 7 \cdot 163$ & $7\cdot 163$ & $-\frac{1}{9}(\rho_n^2-176\rho_n -5)$ & $X^3-X^2-380X+169$ \\ \hline
 $201$ & $3^3\cdot 7^2 \cdot 31$ & $7\cdot 31$ & $\frac{1}{63}(5\rho_n^2-1006\rho_n -592)$ & $X^3-X^2-72X+225$ \\ \hline
 $228$ & $3^3\cdot 1951$ & $1951$ & $\frac{1}{9}(3\rho_n^2-230\rho_n -5)$ & $X^3+X^2-650X-289$ \\ \hline
 $255$ & $3^3\cdot 2437$ & $2437$ & $\frac{1}{9}(\rho_n^2-257\rho_n -5)$ & $X^3+X^2-812X-361$ \\ \hline
 $282$ & $3^3\cdot 13 \cdot 229$ & $13\cdot 229$ & $-\frac{1}{9}(\rho_n^2-284\rho_n -5)$ & $X^3-X^2-992X+441$ \\ \hline
 $309$ & $3^3\cdot 3571$ & $3571$ & $\frac{1}{9}(\rho_n^2-311\rho_n -5)$ & $X^3+X^2-1190X-529$ \\ \hline
 $336$ & $3^3\cdot 4219$ & $4219$ & $\frac{1}{9}(\rho_n^2-338\rho_n -5)$ & $X^3+X^2-1406X-625$ \\ \hline
 $363$ & $3^3\cdot 7 \cdot 19 \cdot 37$ & $7\cdot 19\cdot 37$ & $\frac{1}{9}(\rho_n^2-365\rho_n -5)$ & $X^3+X^2-1640X-729$ \\ \hline
 $390$ & $3^3\cdot 7 \cdot 811$ & $7 \cdot 811$ & $-\frac{1}{9}(\rho_n^2-392\rho_n -5)$ & $X^3-X^2-1892X+841$ \\ \hline
 $417$ & $3^3 \cdot 13 \cdot 499$ & $ 13\cdot 499$ & $-\frac{1}{9}(\rho_n^2-419\rho_n -5)$ & $X^3-X^2-2162X+961$ \\ \hline
 $444$ & $3^3 \cdot 7351$ & $7351$ & $\frac{1}{9}(\rho_n^2-446\rho_n -5)$ & $X^3+X^2-2450X-1089$ \\ \hline
 $471$ & $3^3 \cdot 8269$ & $8269$ & $\frac{1}{9}(\rho_n^2-473\rho_n -5)$ & $X^3+X^2-2756X-1225$ \\ \hline
 $498$ & $3^3\cdot 9241$ & $9241$ & $\frac{1}{9}(\rho_n^2-500\rho_n -5)$ & $X^3+X^2-3080X-1369$ \\ \hline
\end{tabular}
\endgroup
\end{center}
\end{table}
\end{ex}
\bigskip
\[
\textrm{Acknowledgements}
\]
\quad
The authors would like to thank Professor Hiroshi Tsunogai
 for his detailed explanation of this topic. 
 They would also like to thank Professors Masanari Kida and Shun'ichi Yokoyama
  for their advice on calculations with Magma.
\newpage

\end{document}